\setlist[enumerate]{itemsep=0.5ex}
\theoremstyle{plain}
\newtheorem{theorem}{Theorem}[section]
\newtheorem{proposition}[theorem]{Proposition}
\newtheorem{lemma}[theorem]{Lemma}
\newtheorem{corollary}[theorem]{Corollary}
\newtheorem{conjecture}[theorem]{Conjecture}
\theoremstyle{definition} 
\newtheorem{definition}[theorem]{Definition}
\newtheorem{example}[theorem]{Example}
\newtheorem*{claim*}{Claim}
\theoremstyle{remark}
\numberwithin{equation}{section}
\newcommand{\Sc}{\mathrm{Sc}}
\newcommand{\C}{\mathbb{C}}
\newcommand{\R}{\mathbb{R}}
\newcommand{\ind}{\textup{Ind}}
\newcommand{\id}{\mathrm{id}}
\newcommand{\supp}{\mathrm{supp}}
\newcommand{\prop}{\mathrm{prop}}
\newcommand{\Ric}{\mathrm{Ric}}
\newcommand{\Q}{\mathbb{Q}}
\newcommand{\interior}[1]{%
	{\kern0pt#1}^{\mathrm{\,o}}%
}
\let\save@mathaccent\mathaccent
\newcommand*\if@single[3]{%
	\setbox0\hbox{${\mathaccent"0362{#1}}^H$}%
	\setbox2\hbox{${\mathaccent"0362{\kern0pt#1}}^H$}%
	\ifdim\ht0=\ht2 #3\else #2\fi
}
\newcommand*\rel@kern[1]{\kern#1\dimexpr\macc@kerna}
\newcommand*\wideaccent[2]{\@ifnextchar^{{\wide@accent{#1}{#2}{0}}}{\wide@accent{#1}{#2}{1}}}
\newcommand*\wide@accent[3]{\if@single{#2}{\wide@accent@{#1}{#2}{#3}{1}}{\wide@accent@{#1}{#2}{#3}{2}}}
\newcommand*\wide@accent@[4]{%
	\begingroup
	\def\mathaccent##1##2{%
		\let\mathaccent\save@mathaccent
		\if#42 \let\macc@nucleus\first@char \fi
		\setbox\z@\hbox{$\macc@style{\macc@nucleus}_{}$}%
		\setbox\tw@\hbox{$\macc@style{\macc@nucleus}{}_{}$}%
		\dimen@\wd\tw@
		\advance\dimen@-\wd\z@
		\divide\dimen@ 3
		\@tempdima\wd\tw@
		\advance\@tempdima-\scriptspace
		\divide\@tempdima 10
		\advance\dimen@-\@tempdima
		\ifdim\dimen@>\z@ \dimen@0pt\fi
		\rel@kern{0.6}\kern-\dimen@
		\if#41
		#1{\rel@kern{-0.6}\kern\dimen@\macc@nucleus\rel@kern{0.4}\kern\dimen@}%
		\advance\dimen@0.4\dimexpr\macc@kerna
		\let\final@kern#3%
		\ifdim\dimen@<\z@ \let\final@kern1\fi
		\if\final@kern1 \kern-\dimen@\fi
		\else
		#1{\rel@kern{-0.6}\kern\dimen@#2}%
		\fi
	}%
	\macc@depth\@ne
	\let\math@bgroup\@empty \let\math@egroup\macc@set@skewchar
	\mathsurround\z@ \frozen@everymath{\mathgroup\macc@group\relax}%
	\macc@set@skewchar\relax
	\let\mathaccentV\macc@nested@a
	\if#41
	\macc@nested@a\relax111{#2}%
	\else
	\def\gobble@till@marker##1\endmarker{}%
	\futurelet\first@char\gobble@till@marker#2\endmarker
	\ifcat\noexpand\first@char A\else
	\def\first@char{}%
	\fi
	\macc@nested@a\relax111{\first@char}%
	\fi
	\endgroup
}
\newcommand\overbar{\wideaccent\overline}
\newcommand*{\transpose}{%
	{\mathpalette\@transpose{}}%
}
\newcommand*{\@transpose}[2]{%
	\raisebox{\depth}{$\m@th#1\intercal$}%
}
\begin{document}
\title[spin surgeries and Novikov conjecture]{Non-negative scalar curvature on spin surgeries and Novikov conjecture}

\author{Jinmin Wang}
\address[Jinmin Wang]{State Key Laboratory of Mathematical Sciences, Academy of Mathematics and Systems Science, Chinese Academy of Sciences}
\email{jinmin@amss.ac.cn}
\thanks{The first author is partially supported by NSFC 12501169.}

\begin{abstract}
	Let $M$ be a closed aspherical manifold. Assume that the rational strong Novikov conjecture holds for $\pi_1(M)$. We show that on any spin surgery of $M$ along a region whose induced homomorphism on the fundamental group is trivial, every complete metric with non-negative scalar curvature is Ricci-flat. In particular, on the connected sum of $M$ with a spin manifold, any complete metric with non-negative scalar curvature is Ricci-flat.
\end{abstract}

\maketitle

\section{Introduction}

The study of scalar curvature on Riemannian manifolds has played a central role in differential geometry and geometric topology for several decades. A landmark result due to Gromov and Lawson \cite{GromovLawson,GromovLawsonAnnals}, using index theory, and independently to Schoen and Yau \cite{SchoenYau3,SchoenYau}, using minimal hypersurface methods, asserts that the $n$-torus does not admit any Riemannian metric of positive scalar curvature. More generally, Gromov--Lawson--Rosenberg conjectured that no aspherical manifold admits a metric of positive scalar curvature. This conjecture is known to hold for aspherical manifolds whose fundamental groups satisfy the rational strong Novikov conjecture \cite{Rosenberg,Yu}, as well as for aspherical manifolds of dimension at most five \cite{ChodoshLi,Gromov5fold}.

The non-existence of positive scalar metric holds for more generally enlargeable manifolds, introduced by Gromov--Lawson \cite{GromovLawson,GromovLawsonAnnals}, including connected sums of torus with a closed spin manifold. Using index-theoretic techniques, Su--Zhang \cite{SuZhang} established a quantitative version of this obstruction for connected sums of enlargeable manifolds with non-spin closed manifolds. Subsequently, Wang--Zhang \cite{WangZhang} and Su \cite{SuRemark} showed that the connected sum of an enlargeable manifold with any spin manifold does not admit a metric of positive scalar curvature. Related non-existence results for positive scalar curvature on non-spin connected sums, obtained via minimal hypersurface methods, can be found in \cite{ChodoshLi,ChenChuZhu,MR4291609}.

In this paper, we extend the above non-existence of positive scalar metrics to a significantly broader class of manifolds. 
\begin{theorem}\label{thm:connectedSum}
	Let $M^n$ be a closed aspherical manifold and $N$ a spin manifold. Assume that the rational strong Novikov conjecture holds for $\pi_1(M)$.
	Then the connected sum $M\# N$ does not carry any complete metric with positive scalar curvature. Furthermore, any complete metric with non-negative scalar curvature on $M\# N$ is Ricci-flat.
\end{theorem}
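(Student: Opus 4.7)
The plan is to run higher index theory for the spin Dirac operator on a $\pi_1(M)$-cover of $X:=M\#N$, to use the strong Novikov hypothesis to deduce that this higher index is non-zero, and finally to run a $C^*$-algebraic Bochner--Lichnerowicz argument to upgrade the $R_g\ge 0$ assumption to Ricci-flatness.

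First I would pass to the regular cover $p:\hat X\to X$ determined by the natural surjection $\pi_1(X)=\pi_1(M)*\pi_1(N)\twoheadrightarrow\pi_1(M)$ that kills $\pi_1(N)$. Assuming $M$ is spin (so that $X$ is also spin), the lifted metric and spin structure produce a $\pi_1(M)$-equivariant Dirac operator $\hat D$ on $\hat X$ and a higher index $\mathrm{Ind}_{\pi_1(M)}(\hat D)\in KO_n(C^*_r\pi_1(M))$. By the higher index theorem, $\mathrm{Ind}_{\pi_1(M)}(\hat D)=\mu(\xi_*[D_X])$, where $\xi:X\to B\pi_1(M)$ classifies $p$, $[D_X]\in KO_n(X)$ is the $KO$-theoretic class of $D_X$, and $\mu$ is the Baum--Connes assembly. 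Now $\xi$ factors as $X\xrightarrow{c}M\simeq B\pi_1(M)$, where $c$ collapses the $N$-summand into the glued disk and so is a degree-one map of oriented manifolds. Consequently the image of $\xi_*[D_X]$ in $KO_n(B\pi_1(M))\otimes\mathbb{Q}$ has non-zero top-degree component --- corresponding to $[M]\in H_n(M;\mathbb{Q})$, which is non-zero by Poincar\'e duality and asphericity of $M$. The rational strong Novikov conjecture for $\pi_1(M)$ makes $\mu$ rationally injective, so $\mathrm{Ind}_{\pi_1(M)}(\hat D)\neq 0$ in $KO_n(C^*_r\pi_1(M))\otimes\mathbb{Q}$.

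Next, I would run a higher Bochner argument on $\hat X$. The pulled-back metric $\hat g$ satisfies $R_{\hat g}\ge 0$, and the Lichnerowicz identity $\hat D^2=\hat\nabla^*\hat\nabla+R_{\hat g}/4$ holds on the Hilbert $C^*_r\pi_1(M)$-module of $L^2$-spinors. If $R_g>0$ on the compact $X$ then $R_{\hat g}$ is uniformly positive, so $\hat D^2$ is invertible and its higher index vanishes, contradicting the previous step and ruling out PSC. For the Ricci-flat statement, non-vanishing of the higher index together with $R_{\hat g}\ge 0$ should force the existence of a non-zero ``parallel spinor'' $\psi$ in the $C^*$-kernel of $\hat D$ satisfying both $\hat\nabla\psi=0$ and $R_{\hat g}|\psi|^2=0$; then $|\psi|^2$ is constant and non-zero, so $R_{\hat g}\equiv 0$ and the classical fact that a non-vanishing parallel spinor forces $\Ric=0$ gives $\Ric_{\hat g}=0$, hence $\Ric_g=0$ on $X$.

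The hardest step I foresee is the Ricci-flat conclusion: passing from non-vanishing of a $KO_n(C^*_r\pi_1(M))$-valued higher index to a genuine classical parallel spinor on $\hat X$. Because the ``kernel'' of $\hat D$ is a projective Hilbert $C^*_r\pi_1(M)$-module rather than a finite-dimensional vector space, a careful $C^*$-algebraic refinement of the Bochner argument is required to guarantee that this module contains a section with the right pointwise properties to trivialize the Ricci tensor. A secondary concern, if one wishes to drop the implicit spin assumption on $M$ so that $X$ need not be spin, is to replace $\hat D$ with a suitable relative or Callias-type construction supported on the spin $N$-part of the cover, and to verify that the degree-one argument above still detects a non-zero higher index.
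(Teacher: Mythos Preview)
Your proposal has a genuine gap: you tacitly assume $X=M\#N$ is compact (``If $R_g>0$ on the compact $X$''), but the theorem allows $N$ to be non-compact and only asks that the metric on $X$ be complete. This breaks the argument at every stage. First, the higher index $\mathrm{Ind}_{\pi_1(M)}(\hat D)$ lives in $K_*(C^*_r\pi_1(M))$ only when the $\pi_1(M)$-action on $\hat X$ is cocompact, or when $\hat D$ is already invertible outside a cocompact set; neither holds a priori here, so the identity $\mathrm{Ind}_{\pi_1(M)}(\hat D)=\mu(\xi_*[D_X])$ is not available. Second, on a non-compact $X$ ``positive scalar curvature'' need not be uniformly positive, so Lichnerowicz does not give invertibility of $\hat D$. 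Third, even granting a well-defined non-zero index, your Ricci-flat step --- extracting a classical parallel spinor from a projective Hilbert $C^*$-module kernel on a non-cocompact cover --- is exactly the difficulty you flag, and nothing in the sketch resolves it.

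The paper avoids all three issues by replacing $\hat D$ with a Callias-type operator $B=D_X+\varepsilon\Psi$ on the disjoint union $\widetilde Z\sqcup\widehat Z$, where $\widehat Z=\Gamma\times(\Omega\cup_{\partial\Omega}N)$ and $\Psi$ is a bounded ``flip'' potential identifying the two copies of $\Gamma\times N$ near infinity. The potential makes $B$ invertible outside a cocompact set independently of the geometry of $N$, so its higher index is defined in $K_*(C^*_r\Gamma)$ and is shown to be non-zero via an explicit finite-propagation representative together with a relative index computation (your degree-one collapse idea reappears here, but only after this reduction). If $\Sc_g\ge 0$ and is positive somewhere, a quantitative Poincar\'e inequality (comparing $\int_{\{\rho<1\}}|\sigma|^2$ to $\int|\nabla\sigma|^2$ plus a local term) shows $B$ is globally invertible for small $\varepsilon$, contradicting the index. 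The Ricci-flat step is \emph{not} done by producing a parallel spinor: instead, assuming $|\Ric_g|\ge\delta$ on some ball, the spinorial curvature identity $\sum_j c(e_1)R^\nabla(e_1,e_j)=-\tfrac12 c(\Ric(e_1))$ combined with another Poincar\'e inequality yields $\int_{B_{\delta/2}}|\sigma|^2\le C\|\nabla\sigma\|^2$, which plugs into the same Lichnerowicz estimate and again forces $B$ invertible.
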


	We remark that in Theorem \ref{thm:connectedSum}, the manifold $N$ is allowed to be non-compact, and the metric on $M\#N$ is not required to have bounded geometry. The rational strong Novikov conjecture for $\pi_1(M)$ \cite[Section~2]{Rosenberg} asserts that the assembly map from the $K$-homology of $M$ to the $K$-theory of the group $C^*$-algebra of $\pi_1(M)$ is rationally injective; see Conjecture \ref{conj:Novikov} for a precise formulation. The reader may consult \cite{GongWuYu} for recent progress and a survey of classes of groups known to satisfy this conjecture. To date, no counterexample to the rational strong Novikov conjecture is known.

We further generalize Theorem \ref{thm:connectedSum} to spin surgeries on aspherical manifolds. 

\begin{theorem}\label{thm:main}
	Let $M^n$ be a closed aspherical manifold and $\Omega\subset M$ a smooth region. Assume that
	$$\pi_1(\Omega)\to \pi_1(M)\textup{ is the trivial map}.$$
	Let $N^n$ be a spin manifold with boundary, and $i\colon \partial N\to \partial \Omega$ a diffeomorphism preserving the spin structure\footnote{Although $M$ itself need not be spin, the triviality of $\pi_1(\Omega)\to\pi_1(M)$ implies that $\Omega$ and $\partial\Omega$ inherit a canonical spin structure from the spin universal cover $\widetilde M$.}. Set $Z\coloneqq (M\backslash\Omega)\cup_{\partial\Omega} N$. If the rational strong Novikov conjecture holds for $\pi_1(M)$, then $Z$ does not carry any complete metric with positive scalar curvature. Furthermore, any complete metric with non-negative scalar curvature on $Z$ is Ricci-flat.
\end{theorem}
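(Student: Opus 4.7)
The plan is to derive the theorem from an index-theoretic obstruction on a canonically constructed spin $\Gamma$-cover of $Z$, where $\Gamma := \pi_1(M)$, combined with the rational strong Novikov conjecture. Let $p\colon \widetilde M \to M$ denote the universal cover; since $\widetilde M$ is contractible it is spin. Because $\pi_1(\Omega)\to\pi_1(M)$ is trivial, $\Omega$ lifts to $\widetilde M$, and fixing one lift $\widetilde\Omega$ one has $p^{-1}(\Omega) = \bigsqcup_{g\in\Gamma} g\widetilde\Omega$. I would then form
\[
\widehat Z := \Bigl(\widetilde M \setminus \bigsqcup_{g\in\Gamma} g\widetilde\Omega\Bigr) \cup \bigsqcup_{g\in\Gamma} N_g,
\]
where each $N_g$ is a copy of $N$ glued along $\partial N_g \cong \partial(g\widetilde\Omega)$ via the spin-preserving diffeomorphism. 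The spin structures on $\widetilde M$ and on each $N_g$ agree along the gluing loci, so $\widehat Z$ is spin; the free $\Gamma$-action on $\widetilde M$ extends by permuting the $N_g$, and $\widehat Z/\Gamma = Z$. Pulling back the given complete metric from $Z$ endows $\widehat Z$ with a $\Gamma$-invariant complete metric whose scalar curvature agrees with that of $Z$.

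On the spin cover $\widehat Z$ the spin Dirac operator $\tdirac$ is essentially self-adjoint by completeness, and carries a $\Gamma$-equivariant higher index $\ind_\Gamma(\tdirac)\in K_n(C^*_r\Gamma)$ defined via the Mishchenko $C^*_r\Gamma$-module bundle (or, equivalently, via Roe algebras when $N$ is non-compact). The Lichnerowicz identity $\tdirac^2 = \tcon^*\tcon + \Sc_{\widehat Z}/4$ then forces $\ind_\Gamma(\tdirac) = 0$ whenever $\Sc_Z>0$, and the theorem reduces to producing a nonzero higher index. To do so, I would compare $\widehat Z$ with $\widetilde M$ via a relative higher index theorem in the spirit of Gromov--Lawson--Roe: both carry $\Gamma$-equivariant spin Dirac operators agreeing on the common region $\widetilde M \setminus \bigsqcup_g g\widetilde\Omega$, so the difference $\ind_\Gamma(\tdirac_{\widehat Z}) - \ind_\Gamma(\tdirac_{\widetilde M})$ is supported on the defect $\bigsqcup_g(g\widetilde\Omega\cup_\partial (-N_g))$. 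Since $\Gamma$ permutes these pieces freely, the defect contribution lies in the image of the unital inclusion $K_n(\C) \to K_n(C^*_r\Gamma)$. Meanwhile, $\ind_\Gamma(\tdirac_{\widetilde M})$ realizes, through the Mishchenko assembly map, the image of the fundamental class of $M$ (with an orientation-twist refinement when $M$ is non-spin), and the rational strong Novikov conjecture guarantees that this image is nonzero in $K_n(C^*_r\Gamma)\otimes\Q$ and not contained in the $\Q$-line spanned by the unit class (which lives in a different homological degree). Consequently $\ind_\Gamma(\tdirac_{\widehat Z})\neq 0$, which rules out positive scalar curvature.

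For the Ricci-flat rigidity, suppose instead that $Z$ admits a complete metric with $\Sc_Z \ge 0$. The nonvanishing of $\ind_\Gamma(\tdirac)$ established above forces a nonzero harmonic spinor $\psi$ on $\widehat Z$. Testing Lichnerowicz against $\psi$ pointwise and integrating yields $\tcon \psi \equiv 0$ and $\Sc_Z\equiv 0$. A nonzero parallel spinor then forces $\Ric \equiv 0$ through the standard identity $\Ric(X)\cdot\psi = 0$ extracted from the curvature of the spin connection, so the metric on $Z$ is Ricci-flat.

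The principal obstacle I anticipate is making the relative higher index computation rigorous when $N$ is non-compact and no bounded-geometry hypothesis is imposed on $Z$: one must set up the Mishchenko/Roe higher index so that $\widehat Z$ and $\widetilde M$ can be directly compared along their common (non-compact, not uniformly bounded) end, and one must cleanly identify the defect as a unit-class contribution in $K_n(C^*_r\Gamma)$ even though the defect manifold $\widetilde\Omega \cup_\partial (-N)$ is itself non-compact and requires additional analytic input. A secondary but essential subtlety, active when $M$ is not spin, is tracking a $\mathrm{Spin}^c$ or orientation twist to match $\ind_\Gamma(\tdirac_{\widetilde M})$ with the correct topological fundamental class of $M$; here the hypothesis that $\pi_1(\Omega)\to\pi_1(M)$ is trivial is precisely what ensures coherence of the local spin structures on $N$ and $\Omega$ with the global $K$-theoretic data supplied by $M$.
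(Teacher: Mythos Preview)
Your strategy coincides with the paper's treatment of the \emph{uniformly} positive scalar curvature case (its Proposition~3.1): form the spin $\Gamma$-cover of $Z$, compare with $\widetilde M$ via a relative higher index theorem, identify the defect contribution as lying in the image of $K_n(\C)\to K_n(C^*_r\Gamma)$, and invoke Novikov plus the Chern character to see that the fundamental class of $M$ cannot be cancelled. For that special case your outline is correct and essentially identical to the paper.

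The gap is in the general case, and it is precisely the obstacle you flag in your last paragraph, but it is more serious than you indicate. When $N$ is non-compact and the metric on $Z$ is not of bounded geometry, the higher index of the Dirac operator on your $\widehat Z$ is only defined in $K_*(C^*(\widehat Z)^\Gamma)$, not in $K_*(C^*_r\Gamma)$; to land in the latter you need invertibility outside a $\Gamma$-cocompact set. Lichnerowicz with merely $\Sc>0$ (not uniformly bounded away from zero near infinity in $N$) does \emph{not} give invertibility, so the implication ``$\Sc>0\Rightarrow\ind_\Gamma=0$'' is unavailable, and the relative index comparison with $\widetilde M$ has no home. The paper's remedy is to abandon the bare Dirac operator and instead work on the disjoint union $X=\widetilde Z\sqcup(\Gamma\times Z_1)$ (with $Z_1=\Omega\cup_{\partial\Omega}N$) with a Callias-type perturbation $B=D_X+\varepsilon\Psi$, where $\Psi$ swaps the two identical $N$-ends. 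The potential $\varepsilon\Psi$ forces invertibility at infinity by fiat; a quantitative Poincar\'e inequality then upgrades ``$\Sc\ge 0$ and positive somewhere'' to global invertibility of $B$, while an explicit cut-and-paste construction shows $\ind_\Gamma(B)\ne 0$. This is new input you do not have.

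Your Ricci-flat argument has an independent gap. A nonzero class in $K_*(C^*_r\Gamma)$ does \emph{not} produce an $L^2$ harmonic spinor on the cover: non-invertibility of a self-adjoint operator on a non-compact manifold says nothing about the $L^2$-kernel, and Atiyah's $L^2$-index theorem would only help if the \emph{numerical} index on $Z$ were nonzero, which is not what you proved. The paper avoids harmonic spinors entirely: it reruns the same Callias-operator argument, now using the curvature identity $\sum_j c(e_1)R^{S}(e_1,e_j)=-\tfrac12 c(\Ric(e_1))$ together with the Poincar\'e inequality to show that if $\Sc\equiv 0$ but $\Ric$ is nonzero somewhere, then $B$ is again invertible, contradicting $\ind_\Gamma(B)\ne 0$.
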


Theorem \ref{thm:connectedSum} is a special case of Theorem \ref{thm:main} by choosing $\Omega$ to be a small ball in $M$. The assumption that $\pi_1(\Omega)$ maps trivially into $\pi_1(M)$ is essential; in Example \ref{example} we construct a spin surgery on a closed aspherical manifold that admits a metric of positive scalar curvature when this condition fails.

Our proofs rely on Dirac operator methods. A fundamental ingredient is the relative index theorem for higher indices due to Xie and Yu \cite[Theorem~A]{XieYu14}. Inspired by the approaches in \cite{SuZhang,WangXieLinf}, we construct an explicit representative of the relative higher index and compute it in the $K$-theory of the group $C^*$-algebra. We also provide a Dirac-operator-based proof of the Ricci-flatness statement, following ideas similar to those in \cite{WangZhu}.

As an immediate application, by taking $N=\partial\Omega\times[0,1)$ in Theorem \ref{thm:main}, we obtain the following corollary.

\begin{corollary}\label{coro:remove}
	Let $M^n$ be a closed aspherical manifold and $\Omega\subset M$ a smooth region. Assume that
	$$\pi_1(\Omega)\to \pi_1(M)\textup{ is the trivial map},$$
	and the rational strong Novikov conjecture holds for $\pi_1(M)$. Then $M\backslash\Omega$ does not carry any complete metric with positive scalar curvature. 
\end{corollary}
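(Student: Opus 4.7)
The plan is to deduce the corollary by specializing Theorem \ref{thm:main} to the case where the attached spin manifold is a half-infinite collar on $\partial\Omega$. Concretely, I would take $N := \partial\Omega \times [0,1)$ equipped with the product spin structure, where $\partial\Omega$ carries the canonical spin structure inherited from the spin universal cover $\widetilde M$ (well-defined under the hypothesis on $\pi_1$, as noted in the footnote to Theorem \ref{thm:main}). The map $i\colon \partial N\to \partial\Omega$, $(x,0)\mapsto x$, is then a spin-structure-preserving diffeomorphism by construction.

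Next, I would form $Z := (M\backslash\Omega)\cup_{\partial\Omega} N$ and observe that a standard collar-neighborhood argument produces a diffeomorphism between $Z$ and the open manifold $\interior{(M\backslash\Omega)}$: the attached half-cylinder $N$ fuses with an inward collar of $\partial\Omega$ in $M\backslash\Omega$, and the union retracts onto the interior. Under this identification, any complete Riemannian metric of positive scalar curvature on $M\backslash\Omega$ (interpreted as such on its open interior, since a manifold-with-boundary admits no geodesically complete metric otherwise) corresponds to a complete metric of positive scalar curvature on $Z$, which Theorem \ref{thm:main} forbids.

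There is essentially no substantive obstacle to overcome in this reduction. Spin-structure compatibility is automatic on a product cylinder, and the diffeomorphism $Z\cong\interior{(M\backslash\Omega)}$ is standard from the existence of tubular neighborhoods. The role of the half-infinite collar is simply to convert the manifold-with-boundary $M\backslash\Omega$ into a boundaryless open manifold to which Theorem \ref{thm:main} directly applies; once this conversion is made, the corollary is immediate.
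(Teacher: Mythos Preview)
Your proposal is correct and matches the paper's approach exactly: the paper states the corollary follows immediately from Theorem~\ref{thm:main} by taking $N=\partial\Omega\times[0,1)$. Your additional remarks on the collar-neighborhood diffeomorphism $Z\cong\interior{(M\backslash\Omega)}$ simply flesh out what the paper leaves implicit.
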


The paper is organized as follows. In Section \ref{sec:preliminary}, we review the higher indices and the strong Novikov conjecture. In Section \ref{sec:UPSC}, we prove a simplified version of Theorem \ref{thm:main}, establishing the non-existence of uniformly positive scalar curvature metrics on spin surgeries. The full proof of Theorem \ref{thm:main} is given in Section \ref{sec:nnsc}.

\medskip
\noindent\textbf{Acknowledgments.}  
The author would like to thank Zhizhang Xie for helpful discussions.

\section{Preliminaries on higher indices}\label{sec:preliminary}
In this subsection, we review the higher index theory for elliptic differential operators. For more details, see \cite{willett2020higher,Yulocalization}.

\begin{definition}
	Let $X$ be a proper metric space and $\Gamma$ acts on $X$ properly and isometrically. A \emph{$\Gamma$-$X$-module} is a Hilbert space $H$ equipped with a $C_0(X)$-action $\varphi$ and a $\Gamma$-action $\pi$, such that
	\[
	\pi(\gamma)\big(\varphi(f)\xi\big)=\varphi(f^\gamma)\big(\pi(\gamma)\xi\big),\qquad \forall f\in C_0(X),~\gamma\in\Gamma,~\xi\in H,
	\]
	where $f^\gamma(x)\coloneqq f(\gamma^{-1}x)\in C_0(X)$.
\end{definition}

A $\Gamma$-$X$-module is called \emph{admissible} if
\begin{itemize}
	\item $H$ is \emph{non-degenerate}, namely the representation $\varphi$ is non-degenerate;
	\item $H$ is \emph{standard}, namely no non-zero function in $C_0(X)$ acts as a compact operator.
\end{itemize}

\begin{definition}\label{def:prop-locallycompact}
	Let $H$ be an admissible $\Gamma$--$X$-module $T$ a bounded linear operator on $H$.
	\begin{enumerate}
		\item The \emph{propagation} of $T$ is
		\[
		\prop(T)=\sup\{d(x,y)\mid (x,y)\in \supp(T)\},
		\]
		where $\supp(T)$ is the complement in $X\times X$ of the set of points $(x,y)$ for which there exist $f_1,f_2\in C_0(X)$ with $f_1Tf_2=0$ and $f_1(x)f_2(y)\neq 0$.
		\item $T$ is \emph{locally compact} if both $fT$ and $Tf$ are compact for all $f\in C_0(X)$.
		\item $T$ is \emph{$\Gamma$-equivariant} if $\gamma T=T\gamma$ for all $\gamma\in\Gamma$.
	\end{enumerate}
\end{definition}

\begin{definition}\label{def roe and localization}
	Let $H$ be an admissible $\Gamma$-$X$-module and $B(H)$ the algebra of bounded operators on $H$. The \emph{equivariant Roe algebra} of $X$, denoted $C^*(X)^\Gamma$, is the $C^*$-algebra generated by all locally compact, $\Gamma$-equivariant, finite-propagation operators in $B(H)$.
\end{definition}

For example, if $X$ is a complete Riemannian manifold and $E$ a $\Gamma$-equivariant Hermitian bundle over $X$, then $L^2(X,E)$ is a $\Gamma$-$\widetilde M$-module. In particular, if $\Gamma$ furthermore acts on $X$ \emph{cocompactly}, then we have $C^*(X)^\Gamma\cong C^*_r(\Gamma)\otimes \mathcal K$, so
\[
K_*(C^*(\widetilde M)^\Gamma)\cong K_*(C^*_r(\Gamma)).
\]
Here $C^*_r(\Gamma)$ denote the reduced group $C^*$-algebra of $\Gamma$, generated by $\Gamma$ with respect to the operator norm induced by the left regular action of $\Gamma$ on $\ell^2(\Gamma)$.

Let us now recall the definition of the higher index of elliptic operators. Assume for simplicity that $M$ is a closed Riemannian manifold, $\Gamma=\pi_1(M)$ and $X=\widetilde M$, so that $\Gamma$ acts cocompactly on $\widetilde M$. A continuous function $F\colon\R\to\R$ is a \emph{normalizing function} if it is non-decreasing, odd, and satisfies
\[
\lim_{x\to\pm\infty}F(x)=\pm 1.
\]
For instance, $F(x)=bx/\sqrt{1+b^2x^2}$ for any $b>0$.

Let $E$ be a Hermitian bundle over $M$ and $D_M$ a first-order self-adjoint elliptic operator on $E$. Let $D_{\widetilde M}$ and $\widetilde E$ denote their lifts to $\widetilde M$. Then $H=L^2(\widetilde M,\widetilde E)$ is an admissible $\Gamma$-$\widetilde M$-module.

For the $K_0$-index, assume that $E$ carries a $\mathbb Z_2$-grading and that $D_{\widetilde M}$ is odd:
\[
D_{\widetilde M}=\begin{pmatrix}0&D_{\widetilde M}^+\\ D_{\widetilde M}^-&0\end{pmatrix}.
\]

Let $\chi$ be a normalizing function. Since $\chi$ is odd, $\chi(D_{\widetilde M})$ is also odd and self-adjoint:
\begin{equation}\label{eq:chi}
	F(D_{\widetilde M})=\begin{pmatrix}0&U\\V&0\end{pmatrix}.
\end{equation}
Define
\[
W=\begin{pmatrix}1&U\\0&1\end{pmatrix}
\begin{pmatrix}1&0\\-V&1\end{pmatrix}
\begin{pmatrix}1&U\\0&1\end{pmatrix}
\begin{pmatrix}0&-1\\1&0\end{pmatrix},\qquad
e_{1,1}=\begin{pmatrix}1&0\\0&0\end{pmatrix},
\]
and set
\begin{equation}\label{eq:PD}
	P = We_{1,1}W^{-1}
	= \begin{pmatrix}
		1-(1-UV)^2 & (2-UV)U(1-VU)\\
		V(1-UV)&(1-VU)^2
	\end{pmatrix}.
\end{equation}
Then $P$ is an idempotent in $M_2((C_r^*(\Gamma)\otimes\mathcal K)^+)$, and $P-e_{1,1}\in M_2(C_r^*(\Gamma)\otimes\mathcal K)$. The higher index of $D_{\widetilde M}$ is
\[
\ind_\Gamma(D_{\widetilde M}) \coloneqq [P]-[e_{1,1}] \in K_0(C^*(\widetilde M)^\Gamma)\cong K_0(C^*_r(\Gamma)).
\]

For an ungraded operator, the $K_1$-index is
\[
\ind_\Gamma(D_{\widetilde M})=[e^{2\pi i\frac{F(D_{\widetilde M})+1}{2}}] \in K_1(C^*(\widetilde M)^\Gamma)\cong K_1(C^*_r(\Gamma)).
\]

Let $K_*(\widetilde M)^\Gamma$ denote the $\Gamma$-equivariant $K$-homology of $\widetilde M$, which is isomorphic to $K_*(M)$. Elements of $K_*(M)$ are represented by Dirac-type operators $D_M$, so we obtain the assembly map
\[
\mu=\ind_\Gamma\colon K_*(M)\to K_*(C^*(\widetilde M)^\Gamma)\cong K_*(C^*_r(\Gamma)),\qquad [D_M]\mapsto \ind_\Gamma(D_{\widetilde M}).
\]

\begin{conjecture}[Rational strong Novikov conjecture for fundamental groups of aspherical manifolds]\label{conj:Novikov}
	Let $M$ be a closed aspherical manifold and $\Gamma=\pi_1(M)$. Then $\Gamma$ satisfies the rational strong Novikov conjecture if
	\[
	\mu\colon K_*(M)\otimes\Q \to K_*(C^*_r(\Gamma))\otimes\Q
	\]
	is injective.
\end{conjecture}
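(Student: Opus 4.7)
The statement in Conjecture \ref{conj:Novikov} is the formulation of the rational strong Novikov conjecture for fundamental groups of closed aspherical manifolds. This is a celebrated open problem, and is taken as a hypothesis in the main theorems of the paper rather than established here; a general proof is not expected. What follows is thus a description of the plan one would pursue to verify injectivity of $\mu\otimes\Q$ for a specific group $\Gamma=\pi_1(M)$, via the Dirac--dual-Dirac method that underlies all known cases.

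The plan is as follows. First, I would look for extra geometric structure on $\Gamma$: for instance a coarse embedding into a Hilbert space (Yu), finite asymptotic dimension, a proper isometric action on a CAT(0) or bolic space, amenability at infinity, or compatibility with Lafforgue's Banach $KK$-theoretic framework. Second, I would use this structure to build a dual-Dirac class $\beta$ in a suitable equivariant $KK$-group, together with a Bott class $\alpha$, and show that the Kasparov product $\alpha\otimes\beta$ equals (or rationally retracts to) the unit. Third, applying descent and composing with the assembly functor produces a left inverse of $\mu$ at the level of $K$-theory, possibly after passing to a proxy algebra such as a localization algebra or a Banach completion. In the Hilbert space embedding case, one assembles $\alpha$ from finite-dimensional pieces by combining infinite-dimensional Bott periodicity on $\ell^2$ with a partition of unity subordinate to the embedding, which is the step where the coarse embedding is essentially used.

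The hard part, and the reason the conjecture remains open in general, is that no single geometric structure is known to be available for every fundamental group of a closed aspherical manifold. Any uniform proof producing a rational left inverse of $\mu$ must exploit features of $\Gamma$ beyond the reach of the current coarse-geometric and $KK$-theoretic machinery, so settling the full conjecture would require an essentially new idea. For the purposes of the present paper, however, it suffices that the conjecture is already known for a very broad class of groups, including all groups coarsely embeddable into Hilbert space and hence all hyperbolic groups and all discrete subgroups of connected Lie groups; thus the hypothesis of Theorem \ref{thm:connectedSum} and Theorem \ref{thm:main} is satisfied in wide generality.
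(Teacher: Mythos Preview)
You have correctly identified the situation: Conjecture~\ref{conj:Novikov} is not a result the paper proves, but rather the \emph{definition} of what it means for $\Gamma$ to satisfy the rational strong Novikov conjecture, and it is invoked only as a standing hypothesis in Theorems~\ref{thm:connectedSum} and~\ref{thm:main}. The paper offers no proof of it, so there is nothing to compare your proposal against.

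Your sketch of the Dirac--dual-Dirac strategy and the list of geometric conditions (coarse embeddability into Hilbert space, finite asymptotic dimension, etc.) under which the conjecture is known are accurate and relevant background, consistent with the paper's own remark that one should consult \cite{GongWuYu} for the state of the art. Strictly speaking, though, none of this is required here: a conjecture labeled as such carries no proof obligation, and your opening paragraph already says everything that needs to be said.
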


The readers may refer to \cite{willett2020higher} for the strong Novikov conjecture for arbitrary finitely presented groups.

The above map $\mu=\ind_\Gamma$ indeed factors through the maximal group $C^*$-algebra $C^*_{\max}(\Gamma)$.
By \cite{MR4542729}, Theorems \ref{thm:connectedSum} and \ref{thm:main} also hold if the following maximal version of the rational strong Novikov conjecture holds.

\begin{conjecture}[Maximal rational strong Novikov conjecture for fundamental groups of aspherical manifolds]\label{conj:NovikovMaximal}
	Let $M$ be a closed aspherical manifold and $\Gamma=\pi_1(M)$. Then $\Gamma$ satisfies the maximal rational strong Novikov conjecture if
	\[
	\mu_{\max}\colon K_*(M)\otimes\Q \to K_*(C^*_r(\Gamma))\otimes\Q
	\]
	is injective.
\end{conjecture}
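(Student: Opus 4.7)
The stated conjecture asserts rational injectivity of the maximal assembly map for the fundamental group of a closed aspherical manifold; in full generality this is a long-standing open problem, so what follows is the standard Dirac/dual-Dirac strategy that would have to be adapted, rather than a route expected to succeed unconditionally.

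The plan is to produce a rational left inverse to $\mu_{\max}$ via Kasparov's equivariant $KK$-theory. Since $M$ is aspherical, the universal cover $\widetilde M$ is a model for the universal proper $\Gamma$-space. First I would construct a Dirac class $\alpha \in KK^\Gamma(C_0(\widetilde M), \C)$ coming from the de Rham operator on the Clifford bundle of $T\widetilde M \oplus T^*\widetilde M$ (this avoids any spin hypothesis on $M$), and identify its image under $\mu_{\max}$ with the tautological class of $\widetilde M$. Next I would try to construct a dual-Dirac class $\beta \in KK^\Gamma(\C, C_0(\widetilde M))$ refining this geometric picture. Once both classes are available, the target identity is
\[
\beta \otimes_{C_0(\widetilde M)} \alpha = 1 \in KK^\Gamma(\C, \C),
\]
to be verified by a Clifford rotation homotopy on $T\widetilde M \oplus T^*\widetilde M$. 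Maximal Kasparov descent then splits $\mu_{\max}$, and tensoring with $\Q$ (equivalently, pairing against cyclic cocycles in the Connes--Moscovici framework) delivers the desired rational injectivity.

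The key steps in order would be: (i) geometric identification of $K_*(M)\otimes\Q$ with classes of Dirac-type operators via Poincar\'e duality; (ii) construction of $\beta$, which is the only genuinely open input; (iii) verification of $\beta \otimes_{C_0(\widetilde M)} \alpha = 1$ modulo torsion by deforming the symbols of the operators involved; (iv) maximal Kasparov descent, which is subtler than the reduced case because $C^*_{\max}$ has less functoriality, followed by evaluation at $[1]\in K_0(C^*_{\max}(\Gamma))$ to extract the required splitting.

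The hard part is step (ii). No method is known for producing $\beta$ from the asphericity of $M$ alone: all existing constructions require extra input such as a CAT(0) structure on $\widetilde M$, a coarse embedding of $\Gamma$ into Hilbert space, a proper affine isometric action, or a related geometric/analytic property of $\Gamma$, and there are finitely presented groups for which none of these inputs are available. A serious attempt at the unconditional statement would need to extract a new substitute for non-positive curvature from the mere existence of a finite $K(\Gamma,1)$ model, and I see no current mechanism for doing so; this is precisely why the literature proceeds class-by-class rather than through a single uniform proof, and why the statement is still labelled a conjecture in this paper.
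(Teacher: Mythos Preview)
The item you were asked to ``prove'' is labelled a \emph{conjecture} in the paper, and the paper offers no proof of it whatsoever. It is stated immediately after Conjecture~\ref{conj:Novikov} as a definition of what it means for $\Gamma$ to satisfy the maximal version, together with the remark that the main theorems remain valid under this alternative hypothesis (via the cited reference). There is therefore nothing in the paper to compare your proposal against.

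Your write-up correctly identifies the situation: the statement is open in general, the Dirac/dual-Dirac scheme is the standard template, and the genuine obstruction is the construction of the dual-Dirac element $\beta$ from asphericity alone. That diagnosis is accurate and is exactly why the paper treats the statement as a hypothesis rather than a result. One small correction to your outline: in step~(iv) you speak of evaluating at $[1]\in K_0(C^*_{\max}(\Gamma))$ to ``extract the required splitting,'' but the splitting already comes directly from descent applied to $\beta\otimes\alpha=1$; no further evaluation is needed. Also note that the displayed map in the paper's statement has a typographical slip (the target should be $K_*(C^*_{\max}(\Gamma))\otimes\Q$, not $C^*_r$), which you implicitly corrected by discussing maximal descent.
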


Now let $\widetilde M$ be a regular $\Gamma$-cover of $M$, which is complete but possibly non-compact. In this case $C^*(\widetilde M)^\Gamma$ need not coincide with $C^*_r(\Gamma)$. Let $D_M$ be a Dirac-type operator and $D_{\widetilde M}$ its lift. If $D_M$ is \emph{invertible outside a compact set}, then the higher index of $D_{\widetilde M}$ is still well defined in $K_*(C^*_r(\Gamma))$.

More precisely, let $K\subset M$ be compact and $\widetilde K$ its lift. Suppose $D_{\widetilde M}$ is invertible outside $\widetilde K$, meaning that there exists $\delta>0$ such that
\[
\int_{\widetilde M}|D_{\widetilde M}\sigma|^2\ge \delta^2\int_{\widetilde M}|\sigma|^2
\]
for all smooth sections $\sigma$ supported outside $\widetilde K$. Then the higher index class $\ind_\Gamma(D_{\widetilde M})$ defined as above lies in
\[
K_*\big(C^*(\mathcal N_R(\widetilde K))^\Gamma\big)\cong K_*(C^*_r(\Gamma)),
\]
where $\mathcal N_R(\widetilde K)$ is an $R$-neighborhood of $\widetilde K$ for some $R>0$, on which $\Gamma$ acts cocompactly. See \cite{XieYu14} for the precise definition.

\section{Non-existence of uniformly positive scalar curvature}\label{sec:UPSC}
In this section, we prove a simple version of Theorem \ref{thm:main} to illustrate the key ideas. This version follows from classical index-theoretic tools, while the proof of the full version requires additional new ingredients and will be presented in the next section.
\begin{proposition}\label{prop:noPSC}
	Let $M^n$ be a closed aspherical manifold and $\Omega\subset M$ a smooth region. Assume that
	$$\pi_1(\Omega)\to \pi_1(M)\textup{ is the trivial map}.$$
	Let $N^n$ be a spin manifold with boundary, and $i\colon \partial N\to \partial \Omega$ a diffeomorphism preserving the spin structure. Set $Z\coloneqq (M\backslash\Omega)\cup_{\partial\Omega} N$. If the rational strong Novikov conjecture holds for $\pi_1(M)$, then $Z$ does not carry any complete metric with uniform positive scalar curvature, i.e., scalar curvature bounded below by some positive constant.
\end{proposition}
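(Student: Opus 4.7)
The plan is to combine the Lichnerowicz formula on a suitable $\Gamma$-cover of $Z$ with the Xie--Yu relative higher index theorem \cite[Theorem~A]{XieYu14}, and derive a contradiction from the rational strong Novikov conjecture for $\Gamma \coloneqq \pi_1(M)$. First I build a spin $\Gamma$-cover. Let $\widetilde M \to M$ be the universal cover, which is contractible and hence canonically spin. The triviality of $\pi_1(\Omega) \to \pi_1(M)$ forces the preimage of $\Omega$ in $\widetilde M$ to split as $\widetilde \Omega = \Gamma \cdot \Omega_0$ for a single lift $\Omega_0$, and the spin structure on $\widetilde M$ restricts to a preferred spin structure on $\Omega_0$ and hence on $\partial\Omega_0$. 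Glue one copy of $N$ along $\gamma \cdot \partial\Omega_0$ via $i$ for every $\gamma \in \Gamma$ to form
\[
\widetilde Z \coloneqq (\widetilde M \setminus \widetilde \Omega) \cup_{\Gamma \cdot \partial\Omega_0}(\Gamma \times N),
\]
a spin $\Gamma$-cover of $Z$. (When $M$ itself is not spin, the spin structure on $\widetilde M$ is only projectively $\Gamma$-equivariant, and the construction must be stabilized accordingly; I suppress this technicality.) Assuming for contradiction that $\Sc_Z \geq \kappa^2 > 0$, lift the metric to $\widetilde Z$ and apply Lichnerowicz to get $D_{\widetilde Z}^2 \geq \kappa^2/4$. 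Hence $D_{\widetilde Z}$ is uniformly invertible, in particular invertible outside every compact set, so its higher index $\ind_\Gamma(D_{\widetilde Z}) \in K_n(C^*_r(\Gamma))$ defined at the end of Section~\ref{sec:preliminary} is well-defined and vanishes.

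Next I compare $\widetilde Z$ with $\widetilde M$: the two $\Gamma$-equivariant spin manifolds are isometric outside the surgery region, and by the Xie--Yu relative higher index theorem,
\[
\ind_\Gamma(D_{\widetilde Z}) - \ind_\Gamma(D_{\widetilde M}) = c \quad \text{in } K_n(C^*_r(\Gamma)),
\]
where $c$ lies in the image of the Morita-equivalence inclusion $\iota_* \colon K_n(\mathcal K) \to K_n(C^*_r(\Gamma))$ induced by the inclusion of compact operators supported near a single $\Gamma$-orbit of the surgery region. The image of $\iota_*$ is $\mathbb{Z} \cdot [1]$ when $n$ is even, and zero when $n$ is odd. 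The precise formulation of this relative index identity---and in particular the localization of $c$ to a $\Gamma$-cocompact neighborhood of the surgery region even when $N$ is non-compact---is what requires the uniform invertibility of $D_{\widetilde Z}$ at infinity from the previous paragraph; this is the analytic input supplied by uniform PSC, and is what distinguishes the present argument from the full Theorem~\ref{thm:main}.

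Combining, $\ind_\Gamma(D_{\widetilde M}) \in \iota_*(K_n(\mathcal K))$, so rationally it is a multiple of $[1]$ (or zero). On the other hand, $\ind_\Gamma(D_{\widetilde M}) = \mu([D_M])$ is the assembly image of the $K$-homology fundamental class of $M$. By asphericity, $[D_M] \in K_n(M)\otimes\Q = K_n(B\Gamma)\otimes\Q$ has non-trivial Chern-character component in $H_n(B\Gamma;\Q)$, while the point class---whose image is $[1]$---is concentrated in $H_0$; for $n \geq 1$ these two classes are rationally linearly independent in $K_*(M)\otimes\Q$. The rational injectivity of $\mu$ from Conjecture~\ref{conj:Novikov} carries this independence into $K_*(C^*_r(\Gamma))\otimes\Q$, contradicting $\mu([D_M]) \in \Q\cdot[1]$. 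The main obstacle to be confronted is making the relative higher index identity of the second paragraph fully precise in the non-compact-$N$ regime; this is exactly the place where the uniform PSC hypothesis is consumed, and relaxing it to $\Sc \geq 0$ demands a substantially finer spectral analysis of $D_{\widetilde Z}$ near zero, which is the extra work carried out in the proof of Theorem~\ref{thm:main}.
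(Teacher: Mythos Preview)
Your overall strategy---lift to a $\Gamma$-cover, kill $\ind_\Gamma(D_{\widetilde Z})$ via Lichnerowicz, compare with $\widetilde M$ through a relative higher index identity, and contradict rational Novikov via the Chern character---is exactly the paper's. The one substantive gap is in your application of the relative index theorem. You compare $\widetilde Z$ directly with $\widetilde M$ and assert that the difference of higher indices lies in $\iota_*(K_n(\mathcal K))$. But the Xie--Yu theorem, as stated, requires the two manifolds to be isometric outside a $\Gamma$-\emph{cocompact} set, and when $N$ is non-compact the manifolds $\widetilde Z$ and $\widetilde M$ differ on all of $\Gamma\times N$, which is not $\Gamma$-cocompact. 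You flag this yourself (``the localization of $c$ \ldots\ even when $N$ is non-compact''), but you do not actually carry out the localization; appealing to uniform invertibility at infinity is not by itself a proof.

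The paper closes this gap by inserting an auxiliary comparison manifold $\widehat Z=\Gamma\times Z_1$ with $Z_1=\Omega\cup_{\partial\Omega}N$, given a metric that agrees with $g$ on the $N$-part. Now $\widetilde Z$ and $\widehat Z$ \emph{are} isometric outside the $\Gamma$-cocompact pieces $\widetilde M\setminus\widetilde\Omega$ and $\Gamma\times\Omega$, and gluing these along $\partial\widetilde\Omega$ recovers $\widetilde M$; hence Xie--Yu legitimately gives
\[
\ind_\Gamma(D_{\widetilde Z})-\ind_\Gamma(D_{\widehat Z})=\ind_\Gamma(D_{\widetilde M}).
\]
Since $\widehat Z$ is a disjoint union of copies of $Z_1$ freely permuted by $\Gamma$, one has $\ind_\Gamma(D_{\widehat Z})=\ind(D_{Z_1})\cdot[\id]$, which is exactly your integer multiple of $[1]$ (and is zero when $n$ is odd). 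This is the missing step that turns your assertion about $c$ into an honest computation; once it is in place, your Chern-character argument and the paper's coincide.
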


\begin{proof}
	We begin by explaining the spin structure on $\Omega$. 	Denote $\Gamma=\pi_1(M)$ and let $\widetilde M$ be the universal cover of $M$. Since $\widetilde M$ is contractible, it is automatically spin. However, the $\Gamma$-action on $\widetilde M$ may not preserve a chosen spin structure, as $M$ itself may not be spin. Nevertheless, there exists a double cover $\widehat\Gamma$ of $\Gamma$ acting on $\widetilde M$ and preserving the spin structure; see \cite[Section 3.B)]{Rosenberg}. Because the map $\pi_1(\Omega)\to\pi_1(M)$ is trivial, the lift of $\Omega$ to $\widetilde M$ is the disjoint union
	$\widetilde\Omega\coloneqq\Gamma\times\Omega,$
	where $\Gamma$ acts by translation. Since the $\widehat\Gamma$-action preserves the spin structure on $\Gamma\times\Omega\subset \widetilde M$, the induced $\Gamma$-action also preserves the spin structure. Hence $\Omega$ inherits a unique spin structure from $\widetilde M$.
	
	Let $g$ be a Riemannian metric on $Z$ with uniformly positive scalar curvature, that is,
	$$\Sc_g\geq \delta>0 \quad \text{on } Z.$$
	 As above, the lift of $\Omega$ in $\widetilde M$ decomposes as $\widetilde\Omega=\Gamma\times\Omega$.
	
	Define the $\Gamma$-cover of $Z$ by
	$$\widetilde Z=(\widetilde M\setminus\widetilde\Omega)\cup_{\partial\widetilde{\Omega}}(\Gamma\times N),$$
	which is a $\Gamma$-equivariant spin surgery of $\widetilde M$ with $\widetilde N\coloneqq\Gamma\times N$ along $\partial\widetilde\Omega=\Gamma\times\partial\Omega$. Note that $\widetilde Z$ is in general not the universal cover of $Z$.
	
	Let $\widetilde g$ be the lifted metric on $\widetilde Z$. By assumption,
	$$\Sc_{\widetilde g}\geq \delta>0.$$
	Assume first that $M$ is spin, so the $\Gamma$-action preserves the spin structure on $\widetilde M$, hence $\widetilde Z$. Denote by $S(T\widetilde Z)$ its spinor bundle, and by $D_{\widetilde Z}$ the associated Dirac operator. By the Lichnerowicz formula, for any $\sigma\in C_c^\infty(\widetilde Z,S(T\widetilde Z))$,
	\begin{equation}
		\int_{\widetilde Z}\!|D_{\widetilde Z}\sigma|^2
		=\int_{\widetilde Z}\!|\nabla\sigma|^2+\frac{\Sc_{\widetilde g}}{4}|\sigma|^2.
	\end{equation}
	Thus,
	$$\|\sigma\|^2\geq \frac{\delta}{4}\|\sigma\|^2,$$
	which shows that $D_{\widetilde Z}$ is invertible.
	
	By the construction in the previous subsection, since $D_{\widetilde Z}$ is invertible outside a $\Gamma$-cocompact set (in fact an empty set), it defines a higher index
	$$\ind_\Gamma(D_{\widetilde Z})=0\in K_n(C_r^*(\Gamma)).$$
	
	Let
	$$Z_1=\Omega\cup_{\partial\Omega}N$$
	be a complete spin manifold with a metric $g_1$ satisfying
	$$g_1|_{N_1\setminus\mathcal N_r(\Omega)}=g|_{N\setminus\mathcal N_r(\partial N)}$$
	for sufficiently small $r>0$, where $\mathcal N_r(\cdot)$ denotes the $r$-neighborhood.
	
	 Define
	$$\widehat Z=\Gamma\times Z_1,$$
	the disjoint union of isometric copies of $Z_1$, equipped with the translation $\Gamma$-action and the $\Gamma$-equivariant metric $\widehat g$. The spin structure on $\widehat Z$ is induced from the spin structure on $N$ and on $\Gamma\times\Omega\subset\widetilde M$. Let $S(T\widehat Z)$ denote its spinor bundle and $D_{\widehat Z}$ its Dirac operator. Since $g_1$ has uniformly positive scalar curvature outside a compact set, it defines an index class
	$$\ind_\Gamma(D_{\widehat Z})\in K_*(C_r^*(\Gamma)).$$
	
	The manifolds $\widetilde Z$ and $\widehat Z$ are isometric outside a neighborhood of $\Gamma\times\partial\Omega$, and removing these isometric pieces and gluing the remaining parts yields $\widetilde M$. Hence, by the relative index theorem \cite[Theorem A]{XieYu14},
	$$\ind_\Gamma(D_{\widetilde Z})-\ind_\Gamma(D_{\widehat Z})=\ind_\Gamma(D_{\widetilde M})\in K_*(C_r^*(\Gamma)).$$
	Thus,
	$$0=\ind_\Gamma(D_{\widetilde M})+\ind_\Gamma(D_{\widehat Z})\in K_*(C_r^*(\Gamma)).$$
	
	We now show that this class is non-zero. Since $\widehat Z=\Gamma\times Z_1$ is a disjoint union,
	$$\ind_\Gamma(D_{\widehat Z})=\ind(D_{Z_1})\cdot[\id],$$
	where $\ind(D_{Z_1})$ is the Fredholm index of the Dirac operator on $Z_1$, and $[\id]$ is the class of the identity in $K_0(C_r^*(\Gamma))$. In particular, we have
	$$\mu([1])=[\id],$$
	where $\mu$ is the assembly map of Conjecture \ref{conj:Novikov} and $[1]$ denotes the trivial elliptic operator in $K_*(M)$. Therefore,
	$$\mu([D_M]+\ind(D_{Z_1})\cdot[1])=\ind_\Gamma(D_{\widetilde M})+\ind_\Gamma(D_{\widehat Z})=0,$$
	where $[D_M]$ is the $K$-homology class of the Dirac operator on $M$.
	
	Let
	$$\textup{ch}\colon K_*(M)\otimes\mathbb Q\longrightarrow H_*(M,\mathbb Q)$$
	be the rational Chern character, an isomorphism by a standard Mayer--Vietoris argument. By the Atiyah--Singer index theorem,
	$$
	\begin{cases}
		\textup{ch}([1])=[1]\in H_0(M,\mathbb Q),\\[2mm]
		\textup{ch}([D_M])=(\hat A(TM))^*=[M]+\textup{lower degree terms},
	\end{cases}
	$$
	where $*$ denotes Poincar\'e duality and $[M]$ is the fundamental class in $H_n(M,\mathbb Q)$. Under the composite map
	$$H_*(M,\mathbb Q)\xrightarrow{\textup{ch}^{-1}}K_*(M)\otimes\mathbb Q\xrightarrow{\mu}K_*(C_r^*(\Gamma))\otimes\mathbb Q,$$
	the class $\ind_\Gamma(D_{\widetilde M})+\ind_\Gamma(D_{\widehat Z})$ has a non-zero preimage
	$$[M]+\textup{lower degree terms}\in H_*(M,\mathbb Q).$$
	By the injectivity of $\mu$ in Conjecture \ref{conj:Novikov}, we see that
	$$\ind_\Gamma(D_{\widetilde M})+\ind_\Gamma(D_{\widehat Z})\neq 0.$$
	This contradicts the earlier equality and completes the proof.
	
	In the general case, when the $\Gamma$-translation does not preserve the spin structure on $\widetilde M$, there exists a double cover $\widehat\Gamma$ of $\Gamma$ acting on $\widetilde M$ as well as $\widetilde Z$ and $\widehat Z$ preserving the spin structures. Computing the index in $C_r^*(\widehat\Gamma)$ as in \cite[Section 3.B)]{Rosenberg} yields the same contradiction.
\end{proof}
\section{Non-negative scalar curvature on spin surgeries}\label{sec:nnsc}

In this section, we prove the remaining part of Theorem~\ref{thm:main}. The key ingredients in the proof are a concrete construction of the relative index using Callias-type operators and a quantitative Poincar\'e inequality.

To illustrate the main idea, let us first show that any complete Riemannian metric on $Z$ with non-negative scalar curvature must be scalar-flat.

\begin{proof}[Proof of Theorem \ref{thm:main}: scalar-flat part]
	Let $g$ be a Riemannian metric on $Z$ with non-negative scalar curvature. We first show that $g$ is scalar-flat. Suppose the contrary that $\Sc_g$ is positive somewhere, that is,
	\[
	\Sc_g \geq 0 \text{ on } Z, \qquad \text{and} \qquad
	\Sc_g(y)\geq \delta>0, \quad \forall y\in B_\delta(x)
	\]
	for some $x\in Z$ and $\delta>0$.
	
	We employ the same notations as in the proof of Proposition~\ref{prop:noPSC}. Let $\Gamma=\pi_1(M)$ and let $\widetilde M$ denote the universal cover of $M$. Let $\widetilde{Z}$ be the $\Gamma$-cover of $Z$ obtained by gluing $\widetilde M$ and $\Gamma\times N$, equipped with the lifted metric $\widetilde g$. Let $Z_1=\Omega\cup_{\partial\Omega} N$ be a complete spin manifold equipped with a Riemannian metric $g_1$ such that
	\[
	g_1|_{N_1\setminus \mathcal N_r(\Omega)} = g|_{N\setminus \mathcal N_r(\partial N)}.
	\]
	Let $\widehat{Z}=\Gamma\times Z_1$ be the spin manifold equipped with the $\Gamma$-translation and the $\Gamma$-equivariant metric $\widehat g$. For simplicity, we assume that $\Gamma$ preserves the spin structures on $\widetilde M$ and $\widetilde Z$.
	
	Set
	\[
	X = \widetilde{Z}\sqcup \widehat{Z}, \qquad g_X = \widetilde g \sqcup \widehat g,
	\]
	and consider the Dirac operator
	\[
	D_X = D_{\widetilde Z} \sqcup (-D_{\widehat Z})
	\]
	acting on $S(TX)$.
	
	Denote by $\widetilde N$ and $\widehat N$ the copies of $\Gamma\times N$ inside $\widetilde Z$ and $\widehat Z$, respectively. Set $Y = \widetilde N \sqcup \widehat N$. Let $\id\colon S(T\widetilde N) \to S(T\widehat N)$ be the identity map. Let $\rho\colon N \to [0,1]$ be a smooth function such that $\rho \equiv 0$ on $N_{2r}(\partial N)$ and $\rho \equiv 1$ outside $N_{3r}(\partial N)$. Extend it by zero to $Z$ and $Z_1$, and lift it to $X$, still denoted by $\rho$.
	
	Define the operator
	\[
	\Psi \coloneqq \rho U \coloneqq \rho \cdot 
	\begin{pmatrix}
		0 & \id \\
		-\id & 0
	\end{pmatrix}
	\]
	acting on $S(TX)$. More precisely, let $\sigma$ be a smooth section of $S(TX)$, and write $\rho \sigma = \widetilde{\rho\sigma} \sqcup \widehat{\rho\sigma}$ on $Y$. Then
	\[
	\Psi(\sigma) = \widehat{\rho\sigma} \sqcup -\widetilde{\rho\sigma}.
	\]
	
	Let $\varepsilon>0$ be sufficiently small. Consider
	\[
	B = D_X + \varepsilon \Psi.
	\]
	For any compactly supported smooth section $\sigma$ of $S(TX)$,
	\begin{equation}\label{eq:Lich1}
		\int_X |B\sigma|^2
		= \int_X |D\sigma|^2
		+ \varepsilon \langle (\Psi D_X + D_X \Psi) \sigma, \sigma \rangle
		+ \varepsilon^2 \rho^2 |\sigma|^2.
	\end{equation}
	
	On $Y = \widetilde N \sqcup \widehat N$, we have
	\[
	D_Y U + U D_Y = 0,
	\]
	so
	\[
	\Psi D_X + D_X \Psi = - U c(\nabla \rho) \ge -|\nabla \rho|.
	\]
	Let $m = \sup |\nabla \rho|$. The support of $\nabla \rho$ is a disjoint union of finitely many $\Gamma$-translates, which is compact modulo $\Gamma$.
	
	Using the Lichnerowicz formula, we get
	\begin{equation}\label{eq:Lichnerowicz}
		\begin{split}
			\int_X |B\sigma|^2
			&\ge
			\int_X \Bigl(|\nabla \sigma|^2 + \frac{\Sc_{g_X}}{4} |\sigma|^2 \Bigr)
			+ \varepsilon^2 \int_{\{\rho=1\}} |\sigma|^2
			- \varepsilon m \int_{\{\rho<1\}} |\sigma|^2 \\
			&\ge
			\int_X \Bigl(|\nabla \sigma|^2 + \frac{\Sc_{g_X}}{4} |\sigma|^2 \Bigr)
			+ \varepsilon^2 \|\sigma\|^2
			- (\varepsilon^2 + \varepsilon m) \int_{\{\rho<1\}} |\sigma|^2.
		\end{split}
	\end{equation}
	
	Let $B_\delta(\widetilde x)$ be the lift of $B_\delta(x)$ in $X$. Using that $\Sc_g \ge \delta$ on $B_\delta(x)$,
	\begin{equation}\label{eq:Lichnerowicz2}
		\|B\sigma\|^2
		\ge
		\int_X |\nabla \sigma|^2
		+ \frac{\delta}{4} \int_{B_\delta(\widetilde x)} |\sigma|^2
		+ \varepsilon^2 \|\sigma\|^2
		- (\varepsilon^2 + \varepsilon m) \int_{\{\rho<1\}} |\sigma|^2.
	\end{equation}
	
	Since $B_\delta(x)$ and $\{\rho<1\}$ are compact with finite distance, by \cite[Lemma~2.7]{WangXieWarped}, there exists a constant $C>0$ such that
	\[
	\int_{\{\rho<1\}} |\sigma|^2
	\le
	C \int_X |\nabla \sigma|^2
	+ C \int_{B_\delta(\widetilde x)} |\sigma|^2.
	\]
	
	Plugging this into \eqref{eq:Lichnerowicz2}, we obtain
	\[
	\|B\sigma\|^2
	\ge
	\varepsilon^2 \|\sigma\|^2
	+ (1 - C \varepsilon^2 - C m \varepsilon) \int_X |\nabla \sigma|^2
	+ \left(\frac{\delta}{4} - C \varepsilon^2 - C m \varepsilon\right) \int_{B_\delta(\widetilde x)} |\sigma|^2.
	\]
	
	If $\varepsilon>0$ is sufficiently small so that both coefficients are positive, then
	\[
	\|B\sigma\|^2 \ge \varepsilon^2 \|\sigma\|^2,
	\]
	so $B$ is invertible. However, by Lemma~\ref{lemma:index} below, $B$ defines a non-zero class in $K_*(C^*_r(\Gamma))$, which is a contradiction. This completes the proof of the scalar-flat part.
\end{proof}

Let us give the construction of the index class. We first establish an estimate for the operator $B$.

\begin{lemma}{\cite[Lemma 2.11]{WXYZlp}}\label{lemma:bound}
	Let $\chi$ be any smooth cut-off function supported in $\{\rho=1\}$. Then for any $\lambda>0$,
	\[
	\bigl\|(1+\lambda^2+b^2 B^2)^{-1/2} \rho \bigr\| \le \frac{1}{\sqrt{1+\lambda^2+b^2 \varepsilon^2}}.
	\]
\end{lemma}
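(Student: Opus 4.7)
My plan is to deduce the bound from the global operator inequality $B^{2} \ge \varepsilon^{2}\,I$ together with operator monotonicity of the resolvent, since once that lower bound is in hand the conclusion is pure functional calculus. I expect the argument to mirror, and repackage, the quadratic estimate that already appeared in the scalar-flat part of the proof.

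First I would verify that $B = D_X + \varepsilon\Psi$ is (essentially) self-adjoint under the convention making $\Psi$ a self-adjoint Callias-type potential, so that $B^{2}$ is a non-negative self-adjoint operator. Then, for any compactly supported smooth $\sigma$, the Lichnerowicz identity
\[
\|B\sigma\|^{2} \;=\; \|\nabla\sigma\|^{2} + \tfrac{\mathrm{Sc}_{g_X}}{4}\|\sigma\|^{2} + \varepsilon\langle(\Psi D_X + D_X\Psi)\sigma,\sigma\rangle + \varepsilon^{2}\|\rho\sigma\|^{2}
\]
from \eqref{eq:Lich1}, together with $\Psi D_X + D_X \Psi = c(\nabla\rho)U$ (supported in $\{0<\rho<1\}$), gives exactly the same pointwise control as \eqref{eq:Lichnerowicz}: the negative contribution from the cross term lives only in the transition annulus $\{\rho<1\}$ and is absorbed, via the quantitative Poincar\'e inequality from \cite[Lemma~2.7]{WangXieWarped} applied exactly as in the scalar-flat proof, into $\|\nabla\sigma\|^{2}$ together with a small fraction of the contribution from the positive-curvature ball. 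For $\varepsilon>0$ sufficiently small the upshot is the global operator bound $B^{2} \ge \varepsilon^{2}\,I$.

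Given $B^{2} \ge \varepsilon^{2}\,I$, the operator $1 + \lambda^{2} + b^{2}B^{2}$ is bounded below by $1 + \lambda^{2} + b^{2}\varepsilon^{2}$ in the self-adjoint order, so operator monotonicity of $t \mapsto t^{-1/2}$ on $(0,\infty)$ yields
\[
(1+\lambda^{2}+b^{2}B^{2})^{-1/2} \;\le\; (1+\lambda^{2}+b^{2}\varepsilon^{2})^{-1/2}\,I.
\]
Since $\rho$ takes values in $[0,1]$, the multiplication operator $\rho$ has operator norm at most $1$, and composing gives the claimed bound $\bigl\|(1+\lambda^{2}+b^{2}B^{2})^{-1/2}\rho\bigr\| \le (1+\lambda^{2}+b^{2}\varepsilon^{2})^{-1/2}$.

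The main obstacle is really the first step: upgrading the Lichnerowicz identity to a genuinely global operator inequality $B^{2} \ge \varepsilon^{2}\,I$. The cross term $\varepsilon\langle(\Psi D_X+D_X\Psi)\sigma,\sigma\rangle$ is supported in the transition annulus $\{0<\rho<1\}$ and cannot be signed pointwise, so absorbing it requires the global geometric input already used in the scalar-flat part (completeness of $g$, $\Gamma$-cocompactness of $\{\rho<1\}$, and the positive-curvature ball $B_\delta(x)$). That is precisely where the role of $\chi$ supported in $\{\rho=1\}$ enters the argument: it records that the estimate is local and sharp on the region where the Callias potential $\Psi$ coincides with the constant unitary $U$, and everywhere else one pays the cost tracked by the quantitative Poincar\'e inequality.
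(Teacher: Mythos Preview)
Your argument is not a proof of the lemma as stated: you first re-derive the global lower bound $B^{2}\ge\varepsilon^{2}I$ by replaying the Lichnerowicz--Poincar\'e estimate from the scalar-flat part, and then invoke operator monotonicity. But that lower bound was obtained only under the contradiction hypothesis that $\Sc_{g}$ is strictly positive on some ball $B_{\delta}(x)$; Lemma~\ref{lemma:bound} carries no such hypothesis. This is not merely cosmetic: via Lemma~\ref{lem est Fb}, the bound is invoked again inside Lemma~\ref{lemma:index} for the comparison operator $B_{X^{+}}=D_{X^{+}}+\varepsilon\Psi$ on the $\Gamma$-cocompact manifold $X^{+}$, whose metric is an \emph{arbitrary} extension of $g_{X}$ away from a neighbourhood of $\{\chi<1\}$. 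There is no positive-curvature ball there, no Poincar\'e input available, and no reason for $B_{X^{+}}^{2}\ge\varepsilon^{2}I$ to hold; your route gives nothing in that case.

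The paper's proof is direct and uses neither curvature nor the Poincar\'e inequality. One expands $B^{2}=D_{X}^{2}+\varepsilon^{2}\rho^{2}+\varepsilon(D_{X}\Psi+\Psi D_{X})$, drops the non-negative term $D_{X}^{2}$, and bounds the cross term pointwise by $-\varepsilon|\nabla\rho|$. Setting $\sigma=(1+\lambda^{2}+b^{2}B^{2})^{-1/2}\varphi$ and $\zeta=\max\{0,\,1+\lambda^{2}+b^{2}\varepsilon^{2}\rho^{2}-b^{2}\varepsilon|\nabla\rho|\}$ gives $\|\varphi\|^{2}\ge\|\sqrt{\zeta}\,\sigma\|^{2}$, i.e.\ $\|\sqrt{\zeta}\,(1+\lambda^{2}+b^{2}B^{2})^{-1/2}\|\le 1$. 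On the support of $\chi\subset\{\rho=1\}$ one has $\nabla\rho=0$ and $\rho=1$, so $\zeta=1+\lambda^{2}+b^{2}\varepsilon^{2}$ there, and the inequality follows. The localisation by the cut-off is thus the \emph{mechanism} of the proof, not a harmless multiplier of norm at most~$1$ as in your argument.
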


\begin{proof}
	Let $\varphi$ be any smooth compactly supported spinor on $X$, and set
	\[
	\sigma = (1+\lambda^2 + b^2 B^2)^{-1/2} \varphi.
	\]
	Then
	\begin{align*}
		\int_X \langle (1+\lambda^2 + b^2 B^2) \sigma, \sigma \rangle
		&= \int_X \langle (1+\lambda^2 + b^2 D_X^2 + b^2 \varepsilon^2 \Psi^2 + b^2 \varepsilon (D_X \Psi + \Psi D_X)) \sigma, \sigma \rangle \\
		&\ge \int_X \langle (1 + \lambda^2 + b^2 \varepsilon^2 \rho^2 - b^2 \varepsilon |\nabla \rho|) \sigma, \sigma \rangle.
	\end{align*}
	
	Define
	\[
	\zeta = \max\{0, 1+\lambda^2 + b^2 \varepsilon^2 \rho^2 - b^2 \varepsilon |\nabla \rho|\}.
	\]
	Then
	\[
	\int_X \langle (1+\lambda^2+b^2 B^2) \sigma, \sigma \rangle \ge \|\sqrt{\zeta}\, \sigma\|^2.
	\]
	
	Thus
	\[
	\|\varphi\|^2 \ge \|\sqrt{\zeta} (1+\lambda^2+b^2 B^2)^{-1/2} \varphi \|^2,
	\]
	so the operator $\sqrt{\zeta} (1+\lambda^2+b^2 B^2)^{-1/2}$ has norm at most $1$. On the support of $\chi$, $\sqrt{\zeta} = \sqrt{1+\lambda^2 + b^2 \varepsilon^2}$, and therefore
	\[
	\|\rho (1+\lambda^2+b^2 B^2)^{-1/2}\| 
	= \frac{\|\rho \sqrt{\zeta} (1+\lambda^2+b^2 B^2)^{-1/2}\|}{\sqrt{1+\lambda^2 + b^2 \varepsilon^2}} 
	\le \frac{1}{\sqrt{1+\lambda^2 + b^2 \varepsilon^2}}.
	\]
	This completes the proof.
\end{proof}

\begin{lemma}	\label{lem est Fb}
	Set $F_b = F_b(B)$, where $F_b(x) = bx / \sqrt{1 + b^2 x^2}$. Then for any smooth cut-off function $\chi$ supported in $\{\rho=1\}$ with Lipschitz constant $L$, we have
\begin{enumerate}
	\item $\|[F_b, \chi]\| \le 6 \frac{L \sqrt{b}}{\sqrt{\varepsilon}}$,
	\item $\|(F_b^2 - 1)\chi\| \le \frac{1}{b \varepsilon}$.
\end{enumerate}
\end{lemma}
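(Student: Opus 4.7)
The plan is to derive both estimates from the integral representation
\[
F_b(B)=\frac{2}{\pi}\int_0^\infty bB\,A_\lambda^{-1}\,d\lambda,\qquad A_\lambda:=1+\lambda^2+b^2B^2,
\]
which converts $[F_b,\chi]$ and $F_b^2-1=-A_0^{-1}$ into resolvent expressions amenable to Lemma \ref{lemma:bound}. Part (2) is almost immediate from this viewpoint: since $\chi=\rho\chi$, one writes $A_0^{-1}=A_0^{-1/2}A_0^{-1/2}$ and places $\rho$ between the two square roots to obtain
\[
\|(F_b^2-1)\chi\|\le \|A_0^{-1/2}\|\cdot \|A_0^{-1/2}\rho\|\cdot \|\chi\|\le \frac{1}{\sqrt{1+b^2\varepsilon^2}}\le \frac{1}{b\varepsilon}
\]
directly from Lemma \ref{lemma:bound} with $\lambda=0$.

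For part (1), I would expand the commutator pointwise in $\lambda$ using the resolvent identity $[A_\lambda^{-1},\chi]=-b^2 A_\lambda^{-1}(B[B,\chi]+[B,\chi]B)A_\lambda^{-1}$ to get
\[
[bBA_\lambda^{-1},\chi]=b[B,\chi]A_\lambda^{-1}-b^3 BA_\lambda^{-1}B[B,\chi]A_\lambda^{-1}-b^3 BA_\lambda^{-1}[B,\chi]BA_\lambda^{-1}.
\]
The key observation is that, up to a harmless $O(\varepsilon)$ contribution from $\Psi$, $[B,\chi]$ is Clifford multiplication by $d\chi$; in particular it is a zero-order operator of norm $\le L$ supported in $\{\rho=1\}$, so $[B,\chi]=\rho[B,\chi]=[B,\chi]\rho$. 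Combining this support property with the elementary functional-calculus bounds $\|A_\lambda^{-1/2}\|\le(1+\lambda^2)^{-1/2}$, $\|bBA_\lambda^{-1/2}\|\le 1$, and $\|b^2B^2A_\lambda^{-1}\|\le 1$, and strategically inserting a $\rho$ next to one resolvent factor in each term so as to invoke $\|A_\lambda^{-1/2}\rho\|\le(1+\lambda^2+b^2\varepsilon^2)^{-1/2}$ from Lemma \ref{lemma:bound}, each of the three terms is bounded by
\[
\frac{CbL}{\sqrt{(1+\lambda^2)(1+\lambda^2+b^2\varepsilon^2)}}.
\]

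The integration in $\lambda$ is then finished by AM-GM: $1+\lambda^2+b^2\varepsilon^2\ge 2b\varepsilon\sqrt{1+\lambda^2}$, so the integrand is dominated by $CbL/\bigl((2b\varepsilon)^{1/2}(1+\lambda^2)^{3/4}\bigr)$, and the remaining $\int_0^\infty(1+\lambda^2)^{-3/4}d\lambda$ is a finite beta-type integral. Tracking the $2/\pi$ prefactor, the three terms, and the numerical value of the beta integral yields $\|[F_b,\chi]\|\le CL\sqrt{b/\varepsilon}$ with an explicit constant $C\le 6$. I expect the only real subtlety to be the third term: the naive triple-product estimate $\|bBA_\lambda^{-1}\|^2\cdot bL$ gives an integrand $\sim bL/(1+\lambda^2)$ with no $\varepsilon$-gain, which does not produce the desired $\sqrt{b/\varepsilon}$ scaling; the remedy is to use $[B,\chi]=\rho[B,\chi]$ and the bound $\|bBA_\lambda^{-1}\rho\|\le (1+\lambda^2+b^2\varepsilon^2)^{-1/2}$ on exactly one of the two $bBA_\lambda^{-1}$ factors, which brings the third term in line with the first two.
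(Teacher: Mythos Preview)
Your proposal is correct and follows essentially the same route as the paper: the integral representation of $F_b$, the three-term resolvent expansion of $[bBA_\lambda^{-1},\chi]$, the insertion of $\rho$ next to $[B,\chi]$ to invoke Lemma~\ref{lemma:bound}, and the AM--GM step $1+\lambda^2+b^2\varepsilon^2\ge 2b\varepsilon\sqrt{1+\lambda^2}$ are exactly what the paper does. One small sharpening: since $\Psi=\rho U$ with $U$ a bundle swap commuting with multiplication by $\chi$, and $\rho$ and $\chi$ are scalar functions, one has $[\Psi,\chi]=0$ exactly, so $[B,\chi]=c(\nabla\chi)$ on the nose (no $O(\varepsilon)$ remainder); this is how the paper writes it, and it slightly cleans up your bookkeeping.
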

\begin{proof}	
	Since
\begin{equation}\label{eq:integralF}
	F_b = \frac{2}{\pi} \int_0^{+\infty} \frac{b B}{1 + \lambda^2 + b^2 B^2} \, d\lambda
\end{equation}
	in the strong operator topology, we compute:
	\[
	[F_b,\chi]
	=\frac{2}{\pi}\int_0^\infty 
	\left[
	\frac{bB}{1+\lambda^2+b^2B^2},\chi
	\right] d\lambda .
	\]
	Using the identity
	\begin{align*}
		\left[\frac{bB}{1+\lambda^2+b^2B^2},\chi\right]
		&=
		[bB,\chi](1+\lambda^2+b^2B^2)^{-1} \\
		&\quad - bB(1+\lambda^2+b^2B^2)^{-1}(bB[bB,\chi]+[bB,\chi]bB)(1+\lambda^2+b^2B^2)^{-1},
	\end{align*}
	and writing
	\[
	[B,\chi]=c(\nabla \chi)=\rho\, c(\nabla\chi)\,\rho,
	\]
	we use Lemma~\ref{lemma:bound} to estimate each term.  
	Since
	$|\nabla\chi|\leq L,$
	we obtain
	
	\begin{align*}
		&\|[bB,\chi](1+\lambda^2+b^2B^2)^{-1}\|\\
		\leq &
		b|\nabla\chi|\cdot\|\rho(1+\lambda^2+b^2B^2)^{-1/2}\|\cdot\|(1+\lambda^2+b^2B^2)^{-1/2}\|\\
		\leq& bL\cdot
		\frac{1}{\sqrt{1+\lambda^2+b^2\varepsilon^2}}\cdot\frac{1}{\sqrt{1+\lambda^2}}
		\leq \frac{L\sqrt b}{\sqrt \varepsilon}\cdot \frac{1}{(1+\lambda^2)^{3/4}},
	\end{align*}
	\begin{align*}
		&\|(1+\lambda^2+b^2B^2)^{-1}b^2B^2[bB,\chi](1+\lambda^2+b^2B^2)^{-1}\|\\
		\leq &
		b|\nabla\chi|\cdot\|(1+\lambda^2+b^2B^2)^{-1}b^2B^2\|\cdot
		\|\rho(1+\lambda^2+b^2B^2)^{-1/2}\|\cdot\|(1+\lambda^2+b^2B^2)^{-1/2}\|\\
		\leq&\frac bL\cdot 1\cdot\frac{1}{\sqrt{1+\lambda^2+b^2\varepsilon^2}}\cdot\frac{1}{\sqrt{1+\lambda^2}}\leq\frac{L\sqrt b}{\sqrt \varepsilon}\cdot \frac{1}{(1+\lambda^2)^{3/4}},
	\end{align*}
	and
	\begin{align*}
		&\|(1+\lambda^2+b^2B^2)^{-1}bB[bB,\chi](1+\lambda^2+b^2B^2)^{-1}bB\|\\
		\leq &
		b|\nabla\chi|\cdot\|(1+\lambda^2+b^2B^2)^{-1}bB\|\cdot
		\|\rho(1+\lambda^2+b^2B^2)^{-1/2}\|\cdot\|(1+\lambda^2+b^2B^2)^{-1/2}bB\|\\
		\leq& bL\cdot \frac{1}{\sqrt{1+\lambda^2}}\cdot\frac{1}{\sqrt{1+\lambda^2+b^2\varepsilon^2}}\cdot 1\leq\frac{L\sqrt b}{\sqrt \varepsilon}\cdot \frac{1}{(1+\lambda^2)^{3/4}}.
	\end{align*}
	To summarize, we obtain that
	$$\left\|\left[\frac{bB}{1+\lambda^2+b^2B^2},\chi\right]\right\|\leq \frac{3L\sqrt b}{\sqrt \varepsilon}\cdot \frac{1}{(1+\lambda^2)^{3/4}}$$
	Integrating over $\lambda\in[0,\infty)$ proves (1).
	
	For (2), we note:
	\[
	\|(1+b^2B^2)^{-1}(1-\chi)\|
	\le 
	\|(1+b^2B^2)^{-1/2}\| 
	\cdot 
	\|(1+b^2B^2)^{-1/2}\rho\|
	\le 
	\frac{1}{\sqrt{1+b^2\varepsilon^2}}
	<\frac{1}{b\varepsilon}.
	\]
\end{proof}

\begin{lemma}\label{lemma:index}
	The operator $B$ in the proof of Theorem~\ref{thm:main} defines a class $\ind_\Gamma(B) \in K_n(C^*_r(\Gamma))$. Furthermore, if the rational strong Novikov conjecture holds for $\Gamma = \pi_1(M)$, then $\ind_\Gamma(B) \ne 0$.
\end{lemma}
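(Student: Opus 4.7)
The plan is to mimic the index-theoretic argument in the proof of Proposition~\ref{prop:noPSC}, with the Callias operator $B$ taking the place of the pair $(D_{\widetilde Z}, D_{\widehat Z})$. The goal is to show $\ind_\Gamma(B) = \mu([D_M] + c\cdot[1])$ in $K_n(C^*_r(\Gamma))$ for some explicit integer $c$, and then invoke the Chern character plus rational strong Novikov argument already present in Proposition~\ref{prop:noPSC}.

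\textbf{Construction.} First I would verify that the $P$-construction of Section~\ref{sec:preliminary} applies to the normalizing function $F_b(B)$. Fix a $\Gamma$-invariant cutoff $\chi$ supported in $\{\rho=1\}$ and equal to $1$ outside a $\Gamma$-cocompact neighborhood $\mathcal N_R$ of $\supp(\nabla\rho)$. The estimates of Lemma~\ref{lem est Fb} place the entries $1-UV$ and $1-VU$ appearing in \eqref{eq:PD} (or its $K_1$ analogue for odd $n$) in the closed $\Gamma$-equivariant Roe algebra $C^*(\mathcal N_R)^\Gamma$; the parameter $b$ and the Lipschitz constant of $\chi$ can be tuned so that the error terms from Lemma~\ref{lem est Fb} vanish in the limit. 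Since $\mathcal N_R$ is $\Gamma$-cocompact, $K_n(C^*(\mathcal N_R)^\Gamma)\cong K_n(C^*_r(\Gamma))$, producing a well-defined class $\ind_\Gamma(B)\in K_n(C^*_r(\Gamma))$.

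\textbf{Identification and non-vanishing.} Following the strategy of \cite{SuZhang,WangXieLinf}, I would modify the metric $g_1$ on the end of $Z_1$ to a metric $g_1'$ of uniform positive scalar curvature outside a compact set, while keeping $g_1' = g_1$ in a neighborhood of $\Omega$ (and in particular on $\supp(\nabla\rho)$). The induced deformation of $\widehat g$ on $\widehat Z = \Gamma\times Z_1$ yields a continuous family of Callias operators $B_t$ whose invertibility lower bound from the scalar-flat-part proof extends verbatim (the deformation leaves $B_\delta(\widetilde x)\subset\widetilde Z$ and the Poincar\'e constant unchanged), so $\ind_\Gamma(B) = \ind_\Gamma(B')$ for the deformed operator $B'$. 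On the deformed $\widehat Z'$ the Dirac operator $D_{\widehat Z'}$ is Fredholm with $\ind_\Gamma(D_{\widehat Z'}) = \ind(D_{Z_1'})\cdot[\id]$, and an explicit $K$-theoretic excision argument at the level of the $P$-construction representatives---the Callias-type refinement of the Xie--Yu relative index theorem \cite[Theorem~A]{XieYu14}---should give
\[
\ind_\Gamma(B) = \ind_\Gamma(B') = \ind_\Gamma(D_{\widetilde M}) + \ind_\Gamma(D_{\widehat Z'}) = \mu\bigl([D_M] + \ind(D_{Z_1'})\cdot[1]\bigr).
\]
The preimage on the right has top-degree rational Chern character $[M]\in H_n(M,\Q)$ by the Atiyah--Singer calculation in Proposition~\ref{prop:noPSC}, hence is non-zero; rational strong Novikov then yields $\ind_\Gamma(B)\ne 0$.

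The principal obstacle is the Callias-type relative index formula $\ind_\Gamma(B')=\ind_\Gamma(D_{\widetilde M})+\ind_\Gamma(D_{\widehat Z'})$. The classical statement of \cite{XieYu14} would require $\ind_\Gamma(D_{\widetilde Z})$ to be separately defined, which fails here because $D_{\widetilde Z}$ is not invertible at infinity even after the metric deformation on the $\widehat Z$-side. One must therefore work directly with the $P$-construction representative of $\ind_\Gamma(B')$ inside the Roe algebra, exploiting the explicit Callias structure of $B'$ together with the quantitative estimates of Lemmas~\ref{lemma:bound}--\ref{lem est Fb} to carry out the requisite excision at the $K$-theoretic level.
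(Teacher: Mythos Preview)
Your construction step is in the right spirit; the paper makes it concrete by compressing $P$ with the cutoff, setting $\overline P = e_{1,1} + \sqrt{1-\chi}\,(P-e_{1,1})\,\sqrt{1-\chi}\in M_2((C^*_r(\Gamma)\otimes\mathcal K)^+)$ and choosing $b=a/\varepsilon$, $L=\varepsilon/a$ in Lemma~\ref{lem est Fb} so that $\overline P$ is close enough to an idempotent.

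For the identification, however, the paper takes a different and considerably simpler route that sidesteps the obstacle you correctly flag. Rather than deforming the metric on $\widehat Z$ and then attempting a Callias relative-index splitting---which, as you note, runs into the problem that $D_{\widetilde Z}$ is never invertible at infinity---the paper first approximates $\overline P$ by a finite-propagation operator $\overline P_{\mathrm{fp}}$ of propagation at most some $d>0$, and then replaces the possibly non-compact $N$ by a \emph{compact} spin manifold $N^+$ with $\partial N^+=\partial N$ that agrees with $N$ on the $2d$-neighborhood of $\{\chi<1\}$. The resulting $X^+=\widetilde Z^+\sqcup\widehat Z^+$ is $\Gamma$-\emph{cocompact}. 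Since $\overline P_{\mathrm{fp}}$ is supported on spinors over $\{\chi<1\}$ with propagation $\le d$, it depends only on the geometry in $\mathcal N_{2d}(\{\chi<1\})$ and therefore literally coincides with its counterpart $\overline P^+_{\mathrm{fp}}$ built from $B_{X^+}=D_{X^+}+\varepsilon\Psi$; this gives $\ind_\Gamma(B)=\ind_\Gamma(B_{X^+})$ by a one-line locality observation, with no excision argument needed. On the cocompact $X^+$ the straight-line homotopy $t\mapsto D_{X^+}+t\varepsilon\Psi$ is through $\Gamma$-equivariant elliptic operators, so $\ind_\Gamma(B_{X^+})=\ind_\Gamma(D_{X^+})$, and the argument of Proposition~\ref{prop:noPSC} applies verbatim with the \emph{closed} spin manifold $\Omega\cup_{\partial\Omega}N^+$ in place of~$Z_1$.

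Your route might be completable, but the hard step you isolate---excision at the level of the Callias representative on the non-cocompact $X$---is exactly what the compact-replacement trick eliminates. There is also a subtlety you did not address: your metric deformation occurs at the end of $N$, hence inside $\{\rho=1\}$, so it is not compactly supported and continuity of $t\mapsto\overline{P_t}$ in $C^*_r(\Gamma)\otimes\mathcal K$ is not automatic; one would again need a quantitative finite-propagation argument to see that $\sqrt{1-\chi}\,(P_t-e_{1,1})\,\sqrt{1-\chi}$ is insensitive to changes far inside $\{\chi=1\}$. The paper's approach absorbs all such issues into the single choice of $N^+$.
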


\begin{proof}
	The construction of $\ind(B)$ depends on the parity of the dimension of $M$.
	
	\textbf{Case 1: $\dim M$ is even.}  
	
	Pick a smooth function $\chi\colon N \to [0,1]$ such that
	\begin{itemize}
		\item $\chi$ is supported in $\{\rho=1\}$,
		\item $\chi \equiv 1$ away from the $M$-part of $Z$,
		\item $\chi$ is $L$-Lipschitz.
	\end{itemize}
	
	Write
	\[
	F_b = F_b(B) = \frac{bB}{\sqrt{1+b^2 B^2}} = 
	\begin{pmatrix} 0 & U \\ V & 0 \end{pmatrix}
	\]
	according to the natural grading on $S(TX)$. Set
	\[
	W = \begin{pmatrix} 1 & U \\ 0 & 1 \end{pmatrix}
	\begin{pmatrix} 1 & 0 \\ -V & 1 \end{pmatrix}
	\begin{pmatrix} 1 & U \\ 0 & 1 \end{pmatrix}
	\begin{pmatrix} 0 & -1 \\ 1 & 0 \end{pmatrix}, \quad e_{1,1} = \begin{pmatrix} 1 & 0 \\ 0 & 0 \end{pmatrix},
	\]
	and
	\[
	P = W e_{1,1} W^{-1} = 
	\begin{pmatrix} 1-(1-UV)^2 & (2-UV)U(1-VU) \\ V(1-UV) & (1-VU)^2 \end{pmatrix}.
	\]
	
	To obtain an element in $C^*_r(\Gamma)$, define
	\[
	\overline{P} = \sqrt{1-\chi} P \sqrt{1-\chi} + \chi e_{1,1} = e_{1,1} + \sqrt{1-\chi} (P - e_{1,1}) \sqrt{1-\chi}.
	\]
	Then $\overline{P} \in M_2((C^*_r(\Gamma) \otimes \mathcal K)^+)$ and $\overline{P} - e_{1,1} \in M_2(C^*_r(\Gamma) \otimes \mathcal K)$.  
	
	Choosing $b = a/\varepsilon$ and $L = \varepsilon / a$ in Lemma \ref{lem est Fb} for a sufficiently large universal constant $a>0$, we have $\|\overline{P} - P\| < 1/200$, ensuring
	\[
	\|\overline{P}\| \le 3, \quad \|\overline{P}^2 - \overline{P}\| \le 1/20.
	\]
	Therefore, the spectrum of $\overbar P$ is contained in $\{z\in\C:\textup{Re}(z)\ne 1/2\}$. Let $\Theta$ be the holomorphic function on $\{z\in\C:\textup{Re}(z)\ne 1/2\}$ given by
	$$\Theta(z)=\begin{cases}
		0,&\textup{Re}(z)< 1/2\\
		1,&\textup{Re}(z)> 1/2.
	\end{cases}$$
	Then $\Theta(\overbar P)$ is an idempotent in $M_2((C^*_r(\Gamma)\otimes\mathcal K)^+)$, and we define
	\[
	\ind_\Gamma(B) = [\Theta(\overline{P})] - [e_{1,1}] \in K_0(C^*_r(\Gamma)).
	\]
	
	\textbf{Case 2: $\dim M$ is odd.}  
	
	Using the same cut-off function $\chi$ and number $b$ as above, we define
	$$
	Q = e^{2\pi i \frac{F_b+1}{2}},$$
	and
	$$\overline{Q} = \sqrt{1-\chi} Q \sqrt{1-\chi} + \chi \cdot 1 = 1 + \sqrt{1-\chi} (Q-1) \sqrt{1-\chi}.$$
	Then $\overline{Q} \in (C^*_r(\Gamma) \otimes \mathcal K)^+$, with $\overline{Q}-1 \in C^*_r(\Gamma) \otimes \mathcal K$, and $\overline{Q}$ is invertible. Define
	\[
	\ind_\Gamma(B) = [\overline{Q}] \in K_1(C^*_r(\Gamma)).
	\]
	
	\vspace{.5cm}
	
As operators with finite propagation are dense in $C^*_r(\Gamma)\otimes\mathcal K$, there exists $d>0$ such that:
\begin{itemize}
	\item When $n$ is even, there is $\overbar P_{\textup{fp}}\in M_2((C^*_r(\Gamma)\otimes\mathcal K)^+)$ such that
	\begin{itemize}
		\item $\overbar P_{\textup{fp}}-\overbar P\in M_2(C^*_r(\Gamma)\otimes\mathcal K)$,
		\item $\|\overbar P_{\textup{fp}}-\overbar P\|<1/200$,
		\item the propagation of $\overbar P_{\textup{fp}}$ is no more than $d$;
	\end{itemize}
	\item When $n$ is odd, there is $\overbar Q_{\textup{fp}}\in (C^*_r(\Gamma)\otimes\mathcal K)^+$ such that
	\begin{itemize}
		\item $\overbar Q_{\textup{fp}}-\overbar Q\in C^*_r(\Gamma)\otimes\mathcal K$,
		\item $\|\overbar Q_{\textup{fp}}-\overbar Q\|<1/200$,
		\item the propagation of $\overbar Q_{\textup{fp}}$ is no more than $d$;
	\end{itemize}
\end{itemize}
Then the index class $\ind_\Gamma(B)$ above is preserved when we replace in the definition $\overbar P$ by $\overbar P_{\textup{fp}}$ (resp. $\overbar Q$ by $\overbar Q_{\textup{fp}}$).

Now we show that the index class above is non-zero. Let $\varepsilon>0$ be as in the proof of Theorem \ref{thm:main}, and $b,L,d>0$ as above. Let $N^+$ be a compact spin manifold with boundary $\partial N^+=\partial N$, such that $N^+$ is homeomorphic to $N$ when restricted to the $2d$-neighborhood of $\{\chi<1\}$. 

Set $Z^+=(M\backslash\Omega)\cup_{\partial \Omega} N^+$. Similarly, we define
\[
X^+ = \widetilde Z^+ \sqcup \widehat Z^+ 
= \bigl((\widetilde{M}\backslash\widetilde \Omega)\cup_{\partial\widetilde\Omega}(\Gamma\times N^+)\bigr) 
\sqcup (\Gamma\times Z^+),
\]
equipped with a $\Gamma$-equivariant metric $g_{X^+}$ that coincides with $g_X$ on $\mathcal N_{2d}(\{\chi<1\})$. Note that $\Gamma$ acts on $X^+$ isometrically and cocompactly. We consider the Dirac operator $D_{X^+}$ acting on $S(TX^+)$, and
\[
B_{X^+} = D_{X^+} + \varepsilon \Psi
\]
as defined in the proof of Theorem \ref{thm:main}. 

The same construction and formula as above defines the operators $\overbar P_{\textup{fp}}^+$ (resp. $\overbar Q_{\textup{fp}}^+$) and the class $\ind(B_{X^+})$ in $K_*(C^*_r(\Gamma))$. Now we observe that the operators $\overbar P_{\textup{fp}}$ and $\overbar P_{\textup{fp}}^+$ (resp. $\overbar Q_{\textup{fp}}$ and $\overbar Q_{\textup{fp}}^+$) both act on spinors over $\{\chi<1\}$ and have propagation no more than $d$, hence only depend on the geometric data on the $2d$-neighborhood of $\{\chi<1\}$. Therefore, we obtain
\[
\ind_\Gamma(B) = \ind_\Gamma(B_{X^+}) \in K_*(C^*_r(\Gamma)).
\]

Note that $\Gamma$ acts on $X^+$ cocompactly. Thus, an obvious linear homotopy connecting $B_{X^+}$ and $D_{X^+}$ shows that
\[
\ind_\Gamma(B_{X^+}) = \ind_\Gamma(D_{X^+}) \in K_*(C^*_r(\Gamma)).
\]

Now the same index-theoretical argument as in the proof of Proposition \ref{prop:noPSC} shows that, under the rational injection
\[
H_*(M,\mathbb Q) \xlongrightarrow{\textup{ch}^{-1}} K_*(M)\otimes\mathbb Q \xlongrightarrow{\ind_\Gamma} K_*(C^*_r(\Gamma))\otimes\mathbb Q,
\]
the index class $\ind_\Gamma(D_{X^+})$ admits a non-zero preimage in $H_*(M,\mathbb Q)$ given by
\[
([\hat A(TM)^*]+\ind(D_{\Omega\cup_{\partial\Omega}N^+})\cdot [1])=[M] + \textup{lower degree terms},
\]
where $\Omega\cup_{\partial\Omega} N^+$ is a closed spin manifold and $[1]\in H_0(M,\mathbb Q)$.
This finishes the proof.

\end{proof}

We now finish the proof of Theorem \ref{thm:main} by showing any such metric is Ricci-flat.  Classically, this Ricci-flatness follows from a result of Kazdan \cite{Kazdan}; see also \cite{GromovLawson}. Here, we provide a proof that depends only on the technique of Dirac operators, inspired by \cite{WangZhu}.

\begin{proof}[Proof of Theorem \ref{thm:main}: Ricci-flat part]
	Let $g$ be a complete metric on $Z$ with non-negative scalar curvature. From the first part, we see that $\Sc_g = 0$ on $Z$.  
	
	Assume that $\Ric_g$ is not identically zero. Then there exist $\delta>0$ and $x\in Z$ such that
	\[
	|\Ric_g| \ge \delta > 0\textup{ on } B_\delta(x).
	\]
	Furthermore, let $\{e_i\}_{1\leq i\leq n}$ be a local orthonormal basis near $x$. Then we may assume that
		\[
	|\Ric_g(e_1)| \ge \delta > 0 \textup{ on } B_\delta(x).
	\]
	
	Let us use the same notations as in the first part of the proof. Let $\psi$ be a smooth $\Gamma$-equivariant cut-off function supported on $B_\delta(\widetilde x)$ with $\psi \equiv 1$ on $B_{\delta/2}(\widetilde x)$. We still denote by $\{e_i\}$ its lift to $B_\delta(\widetilde{x})$, and $\nabla$ the spinorial connection. Then for any smooth compactly supported spinor $\sigma$,
	\[
	\sum_{j=1}^n c(e_1) (\nabla_{e_1}\nabla_{e_j} - \nabla_{e_j}\nabla_{e_1} - \nabla_{[e_1,e_j]}) \sigma = -\frac12 c(\Ric_{\widetilde g}(e_1)) \cdot \sigma.
	\]
	
	Hence,
	\[
	\Big\langle \sum_{j=1}^n c(e_1)(\nabla_{e_1}\nabla_{e_j}-\nabla_{e_j}\nabla_{e_1}-\nabla_{[e_1,e_j]})\sigma, -\frac12 c(\Ric_{\widetilde g}(e_1)) \cdot \psi \sigma \Big\rangle = \frac14 \psi |\Ric_{\widetilde g}(e_1)|^2 |\sigma|^2.
	\]
	
	Integrating over $B_\delta(\widetilde x)$ and applying Stokes’ theorem, we obtain
	
	\begin{align*}
		&\delta\int_{B_{\delta/2}(\widetilde x)} |\sigma|^2\leq \int_{B_{\delta}(\widetilde{x})}\frac14 \psi |\Ric_{\widetilde g}(e_1)|^2 |\sigma|^2\\
	=&\sum_{j=1}^n\int_{B_{\delta}(\widetilde{x})}\big\langle
	\nabla_{e_j}\sigma,-\frac 1 2\nabla_{e_1}\left(c(e_1)c(\Ric_{\widetilde g}(e_1))\psi \sigma\right)\big\rangle
	-\int_{B_{\delta}(\widetilde{x})}\big\langle
	\nabla_{e_1}\sigma,-\frac 1 2\nabla_{e_j}\left(c(e_1)c(\Ric_{\widetilde g}(e_1))\psi \sigma\right)\big\rangle\\
	&-\int_{B_{\delta}(\widetilde{x})}\big\langle \nabla_{[e_1,e_j]}\sigma,\frac 1 2 c(\Ric_{\widetilde g}(e_1))\psi \sigma\big\rangle\\
	\leq &~C_1 \int_{B_\delta(\widetilde x)} \big(|\nabla \sigma|^2 + 2|\langle \nabla \sigma, \sigma \rangle|\big).
	\end{align*}
	for some $C_1>0$ depending only on $n$ and $g$ on $B_\delta(x)$.
	
	By the Poincar\'e inequality \cite[Lemma 2.7]{WangXieWarped}, there exists $C_2>0$ such that
	\[
	\int_{B_\delta(\widetilde x)} |\sigma|^2 \le C_2 \int_{B_\delta(\widetilde x)} |\nabla \sigma|^2 + C_2 \int_{B_{\delta/2}(\widetilde x)} |\sigma|^2.
	\]
	
	Thus for any $\alpha>0$, we have
	\begin{equation}
		\begin{split}
			\delta^2\cdot\int_{B_{\delta/2}(\widetilde x)}|\sigma|^2\leq &~C_1\int_{B_{\delta}(\widetilde x)}\left(|\nabla\sigma|^2+\frac{1}{\alpha}|\nabla\sigma|^2+\alpha|\sigma|^2\right)\\
			\leq &\left(C_1+\frac{1}{\alpha}\right)\int_{B_{\delta}(\widetilde x)}|\nabla\sigma|^2\\
			&+\alpha C_1C_2\int_{B_{\delta}(\widetilde x)}|\nabla\sigma|^2+\alpha C_1C_2\int_{B_{\delta/2}(\widetilde x)}|\nabla\sigma|^2
		\end{split}
	\end{equation}
	Let us pick $\alpha=\delta^2/(2C_1C_2)$ and set
	$$C_3=\frac{2}{\delta^2}\left(C_1+\frac{1}{\alpha}+\alpha C_1C_2\right).$$
	We then obtain that
	\begin{equation}
		\int_{B_{\delta/2}(\widetilde x)}|\sigma|^2\leq C_3\int_{B_{\delta}(\widetilde x)}|\nabla\sigma|^2\leq C_3\|\nabla\sigma\|^2.
	\end{equation}
	Therefore, line \eqref{eq:Lichnerowicz} implies that
	\begin{equation}\label{eq:Lichnerowicz3}
		\|B\sigma\|^2\geq \frac 1 2\int_X|\nabla\sigma|^2+\frac{1}{2C_3}\int_{B_{\delta/2}(\widetilde x)}|\sigma|^2+\varepsilon^2\|\sigma\|^2-(\varepsilon^2+\varepsilon m)\int_{\{\rho<1\}}|\sigma|^2.
	\end{equation}
	Now the same proof as in the first part yields a contradiction.
\end{proof}

Now we give an example showing that the condition of the vanishing of $\pi_1(\Omega)\to\pi_1(M)$ in Theorem \ref{thm:main} is necessary.
\begin{example}\label{example}
	Let $M^n$ be a closed aspherical spin manifold with $n\ge 5$, e.g., the $n$-torus. Suppose that $\pi_1(M)$ is represented by finitely many disjoint embedded circles $S^1$ in $M$, whose tubular neighborhoods are diffeomorphic to $S^1 \times B^{n-1}$. Remove these tubular neighborhoods and attach $B^2 \times S^{n-2}$ to obtain a closed manifold $M_1$. Then $M_1$ is simply connected.  
	
	Finally, set $Z \coloneqq M_1 \# M_1$, which is a spin surgery of $M$. Note that $Z$ is simply connected, and the Dirac operator on $Z$ has vanishing $KO$-index as $Z$ is a double. Hence, $Z$ admits a metric with positive scalar curvature \cite{Stolz}.
\end{example}

\bibliographystyle{amsalpha}
\bibliography{ref.bib}

\end{document}